\newtheorem{theorem}{Theorem}[section]
\newtheorem{lemma}[theorem]{Lemma}
\newtheorem{corollary}[theorem]{Corollary}
\theoremstyle{definition}
\theoremstyle{remark}
\newtheorem{remark}[theorem]{Remark}
\numberwithin{equation}{section}
\newcommand{\be}{\begin{equation}}
\newcommand{\ee}{\end{equation}}
\newcommand{\cM}{{\mathcal M}}
\newcommand{\NN}{\mathbb{N}}
\begin{document}
\setcounter{page}{1}

\title[Inequalities on the joint and generalized spectral radius]{Inequalities on the joint and generalized spectral and essential spectral radius  of the Hadamard geometric mean of bounded sets of positive kernel operators}

\author[A. Peperko]{Aljo\v{s}a Peperko$^{1,2} $}

\address{$^{1}$  Faculty of Mechanical Engineering, University of Ljubljana, A\v{s}ker\v{c}eva 6, SI-1000 Ljubljana, Slovenia;
\newline
$^{2}$ Institute of Mathematics, Physics, and Mechanics,
Jadranska 19, SI-1000 Ljubljana, Slovenia.}
\email{\textcolor[rgb]{0.00,0.00,0.84}{aljosa.peperko@fmf.uni-lj.si; aljosa.peperko@fs.uni-lj.si}}

%\dedicatory{This paper is dedicated to Professor ABCD}

\subjclass[2010]{15A42, 15A60, 47B65, 47B34, 47A10, 15B48}
%Primary 47B65; Secondary 15A42, 15A60, 15A18, 15A45, 15B48,}

\keywords{Hadamard-Schur geometric mean; Hadamard-Schur product; joint and generalized spectral radius; essential spectral radius; measure of noncompactness;
 positive kernel operators; non-negative matrices; bounded sets of operators}

%\date{Received: xxxxxx; Revised: yyyyyy; Accepted: zzzzzz.
%\newline \indent $^{*}$ Corresponding author}

\begin{abstract}
 Let $\Psi$ and $\Sigma$ be bounded sets of positive kernel operators on a Banach function space $L$. We prove several refinements of the known inequalities  
$$\rho \left(\Psi  ^{\left( \frac{1}{2} \right)} \circ \Sigma  ^{\left( \frac{1}{2} \right)} \right) \le \rho (\Psi  \Sigma) ^{\frac{1}{2}} \;\; \mathrm{and}\;\; \hat{\rho} \left(\Psi  ^{\left( \frac{1}{2} \right)} \circ \Sigma  ^{\left( \frac{1}{2} \right)} \right) \le \hat{\rho} (\Psi  \Sigma) ^{\frac{1}{2}} $$
for the generalized spectral radius $\rho$ and the joint spectral radius 
$\hat{\rho}$, where $\Psi  ^{\left( \frac{1}{2} \right)} \circ \Sigma  ^{\left( \frac{1}{2} \right)} $ denotes the 
 Hadamard (Schur) geometric mean of the sets $\Psi $ and $\Sigma$. Furthermore, we prove that analogous inequalities hold also for  the generalized essential  spectral radius and  the joint essential spectral radius in the case when $L$ and its Banach dual $L^*$ have order continuous norms.
\end{abstract} \maketitle

\section{Introduction}

In \cite{Zh09}, X. Zhan conjectured that, for non-negative $n\times n$ matrices $A$ and $B$, the spectral radius $\rho (A\circ B)$ of the Hadamard product satisfies
\be
\rho (A\circ B) \le \rho (AB),
\label{qu}
\ee
where $AB$ denotes the usual matrix product of $A$ and $B$. This conjecture was confirmed by K.M.R. Audenaert in \cite{Au10} 
by proving 
\be
\rho (A\circ B) \le \rho ((A\circ A)(B\circ B)) ^{\frac{1}{2}}\le \rho (AB).
\label{Aud}
\ee
These inequalities were established via a trace description of the spectral radius.
%via a trace description of the spectral radius. 
Soon after, inequality (\ref{qu}) was reproved, generalized and refined in different ways by 
several authors (\cite{HZ10},  \cite{Hu11}, \cite{S11}, \cite{Sc11}, \cite{P12}, \cite{CZ15}, \cite{DP16}, \cite{P17}, \cite{P17+}).
% by proving 
%\be
%\rho (A\circ B) \le \rho ^{\frac{1}{2}}((A\circ A)(B\circ B))\le \rho (AB).
%\label{Aud}
%\ee
%These inequalities were established via a trace description of the spectral radius. Using the fact that the Hadamard product is a principal submatrix of the Kronecker product, R.A. Horn and F. Zhang  proved in \cite{HZ10} the inequalities
%\be
%\rho (A\circ B) \le \rho ^{\frac{1}{2}}(AB\circ BA)\le \rho (AB)
%\label{HZ}
%\ee
%and also the right-hand side inequality in (\ref{Aud}). 
Using the fact that the Hadamard product is a principal submatrix of 
the Kronecker product, R.A. Horn and F. Zhang  proved in \cite{HZ10} the inequalities
\be
\rho (A\circ B) \le \rho (AB\circ BA)^{\frac{1}{2}}\le \rho (AB).
\label{HZ}
\ee
Applying the techniques of \cite{HZ10}, Z.  Huang proved that 
%Applying a fact that the Hadamard product is a principal submatrix of the Kronecker product (i.e., by applying the technique used by R.A. Horn and F. Zhang  of \cite{HZ10}), Z. Huang proved that 
\be
\rho (A_1 \circ A_2 \circ \cdots \circ A_m) \le \rho (A_1 A_2 \cdots A_m)
\label{Hu}
\ee
for $n\times n$ non-negative matrices $A_1, A_2, \cdots, A_m$ (see \cite{Hu11}).  A.R. Schep was the first one to observe that the results from \cite{DP05} and \cite{P06} are applicable in this context (see \cite{S11} and \cite{Sc11}). He extended inequalities (\ref{Aud}) and (\ref{HZ}) to non-negative matrices that define bounded 
operators on sequence spaces (in particular on $l^p$ spaces, $1\le p <\infty$) and proved in 
\cite[Theorem 2.7]{S11}  that
\be
\rho (A\circ B) \le \rho ((A\circ A)(B\circ B))^{\frac{1}{2}}\le  \rho (AB\circ AB)^{\frac{1}{2}}\le \rho (AB)
\label{Sproved}
\ee
(note that there is an error in the statement of \cite[Theorem 2.7]{S11}, which was corrected in \cite{Sc11} and \cite{P12}).
In  \cite{P12}, the author of the current paper extended the inequality (\ref{Hu}) to non-negative matrices that define bounded 
operators on Banach sequence spaces (see  \cite{P12} for exact definitions) and proved that the inequalities
\be
\rho (A\circ B) \le \rho ((A\circ A)(B\circ B))^{\frac{1}{2}}\le \rho(AB \circ AB)^{\frac{\alpha}{2}} \rho(BA \circ BA)^{\frac{1-\alpha}{2}} \le \rho (AB)
\label{P1}
\ee
and
\be
\rho (A\circ B) \le \rho (AB\circ BA)^{\frac{1}{2}}\le  \rho(AB \circ AB)^{\frac{1}{4}} \rho(BA \circ BA)^{\frac{1}{4}} \le  \rho (AB).
\label{P2}
\ee
hold, where $\alpha \in [0,1]$. Moreover, % in \cite[Theorem 3.16]{P12} 
 he generalized these inequalities to the setting of the generalized and the joint spectral radius of bounded sets of such non-negative matrices. % and proved several refinements in the case $m=2$. 
%In the proofs certain results on the Hadamard product from \cite{DP05} and \cite{P06} were used. 

%A related inequality 
%for $n\times n$ non-negative matrices was shown in \cite{EJS88}:  
%\be
%\rho (A_1 \circ A_2 \circ \cdots \circ A_m) \le \rho (A_1) \rho (A_2) \cdots \rho (A_m ) .
%\label{EJS}
%\ee

%Earlier, A.R. Schep was the first one to observe that the results  \cite{DP05} and \cite{P06} are applicable in this context (see \cite{S11} and \cite{Sc11}).
In \cite[Theorem 2.8]{S11},  A.R. Schep proved
 %In particular, in \cite[Theorem 2.8]{S11}  he proved
 that the inequality
\be
\rho \left(A ^{\left( \frac{1}{2} \right)} \circ B  ^{\left( \frac{1}{2} \right)} \right) \le \rho (AB) ^{\frac{1}{2}}
\label{Schep}
\ee
holds for positive kernel operators on $L^p$ spaces. Here $A ^{\left( \frac{1}{2} \right)} \circ B  ^{\left( \frac{1}{2}\right)} $ denotes the Hadamard geometric mean of operators $A$ and $B$. In \cite[Theorem 3.1]{DP16}, R. Drnov\v{s}ek and the author, generalized this inequality and proved that the inequality 
\be 
\rho \left(A_1^{\left(\frac{1}{m}\right)} \circ A_2^{\left(\frac{1}{m}\right)} \circ \cdots \circ 
A_m^{\left(\frac{1}{m}\right)}\right)   \le \rho (A_1 A_2 \cdots A_m)^{\frac{1}{m}} 
\label{genHuBfs}
\ee
holds for positive kernel operators $A_1, \ldots, A_m$ on an arbitrary Banach function space. In \cite{P17+}, the author refined (\ref{genHuBfs}) and showed that the inequalities
$$\rho \left(A_1^{\left(\frac{1}{m}\right)} \circ A_2^{\left(\frac{1}{m}\right)} \circ \cdots \circ 
A_m^{\left(\frac{1}{m}\right)}\right) \;\;\;\;\;\;\;\;\;\;\;\;\;\;\;\;\;\;\;\;\;\;\;\;\;\;\;\;\;\;$$
\be
\le \rho  \left(P_1^{\left(\frac{1}{m}\right)} \circ P_2^{\left(\frac{1}{m}\right)} \circ \cdots \circ 
P_m^{\left(\frac{1}{m}\right)}\right) ^{\frac{1}{m}}  \le \rho (A_1 A_2 \cdots A_m) ^{\frac{1}{m}} .  
\label{genHuBfsP}
\ee
hold, where  $P_j = A_j \ldots A_m A_1 \ldots A_{j-1}$ for $j=1,\ldots , m$. Formally, here and throughout the article $A_{j-1}=I$ for $j=1$ (eventhough $I$ might not be a well defined kernel operator). In particular, the following kernel version of  (\ref{HZ}) holds:
\be
\rho \left(A^{(\frac{1}{2})} \circ B^{(\frac{1}{2})} \right) \le  \rho \left((AB)^{(\frac{1}{2})} \circ (BA)^{(\frac{1}{2})}\right)^{\frac{1}{2}} \le \rho(AB) ^{\frac{1}{2}}.
\label{sch_refin}
\ee
In \cite[Theorem 3.4]{P17}, the author generalized the inequality (\ref{genHuBfs}) to the setting of the generalized and the joint spectral radius of bounded sets of positive kernel operators on a Banach function space (see also Theorem \ref{genHuDP} below). As already pointed out in \cite[Remark 3.4]{P17+}, the inequalities (\ref{genHuBfsP}) can also be deduced from the proof of \cite[Theorem 3.4]{P17}.

In this article we prove the kernel versions of all the above matrix inequalities and obtain additional refinements (even in the matrix case). Moreover, by proving the kernel versions of  results from \cite{P12} we generalize these inequalities to the setting of \cite{P17} and observe that analogous inequalities hold also for the essential spectral radius under suitable assumptions. 

The rest of the article is organized as follows. In Section 2 we recall definitions and results that we will use in our proofs and we present our results in Section 3.
In one of our main results (Theorem \ref{refin}) we generalize inequalities (\ref{sch_refin})  to the setting of the generalized and the joint spectral radius of bounded sets of positive kernel operators on an arbitrary Banach function space and give additional refinements in the sense of (\ref{P1}) and (\ref{P2}). Moreover, we prove that analogous results hold also for the generalized essential and the joint essential spectral radius of bounded sets of positive kernel operators on a Banach function space $L$ such that $L$ and $L^*$ have order continuous norms. We also point out in Theorem \ref{essPji} that under these conditions an analogue of (\ref{genHuBfsP}) for the essential radius holds. We give additional refinements %of the last inequality in (\ref{P1}) 
in Corollary \ref{additional_powers}.% (in the above mentioned general setting).

%----------------------------------------------------

\section{Preliminaries}
\vspace{1mm}

Let $\mu$ be a $\sigma$-finite positive measure on a $\sigma$-algebra $\cM$ of subsets of a non-void set $X$.
Let $M(X,\mu)$ be the vector space of all equivalence classes of (almost everywhere equal)
complex measurable functions on $X$. A Banach space $L \subseteq M(X,\mu)$ is
called a {\it Banach function space} if $f \in L$, $g \in M(X,\mu)$,
and $|g| \le |f|$ imply that $g \in L$ and $\|g\| \le \|f\|$. Throughout the article, it is assumed that  $X$ is the carrier of $L$, that is, there is no subset $Y$ of $X$ of 
 strictly positive measure with the property that $f = 0$ a.e. on $Y$ for all $f \in L$ (see \cite{Za83}).
%Observe that a Banach sequence space is a Banach function space over a measure space $(R, \mu)$, 
%where $\mu$ denotes the counting measure on $R$ (and for $L\in \mathcal{L}$ the set $R$ is the carrier of $L$).
%Throughout the article, it is assumed that $X$ is the carrier of $L$, that is, there is no subset $Y$ of $X$ of 
%strictly positive measure with the property that $f = 0$ a.e. on $Y$ for all $f \in L$ (see \cite{Za83}). 

Standard examples of Banach function spaces are Euclidean spaces,  the space $c_0$ 
of all null convergent sequences  (equipped with the usual norms and the counting measure), the
well-known spaces $L^p (X,\mu)$ ($1\le p \le \infty$) and other less known examples such as Orlicz, Lorentz,  Marcinkiewicz  and more general  rearrangement-invariant spaces (see e.g. \cite{BS88}, \cite{CR07}, \cite{KM99} and the references cited there), which are important e.g. in interpolation theory and in the theory of partial differential equations.
 Recall that the cartesian product $L=E\times F$ 
of Banach function spaces is again a Banach function space, equipped with the norm
$\|(f, g)\|_L=\max \{\|f\|_E, \|g\|_F\}$.

If $\{f_n\}_{n\in \mathbb{N}} \subset M(X,\mu)$ is a decreasing sequence and
$f=\inf\{f_n \in M(X,\mu): n \in \NN \}$, then we write $f_n \downarrow f$. %Similarly we define $x_n \uparrow x$.
A Banach function space $L$ has an {\it order continuous norm}, if $0\le f_n \downarrow 0$
implies $\|f_n\|_L \to 0$ as $n \to \infty$. It is well known that spaces $L^p  (X,\mu)$, $1\le p< \infty$, have order continuous
norm. Moreover, the norm of any reflexive Banach function space is
order continuous. %However, the norm $\|x\|_{\infty}=\sup\{|x_i|:i\in \NN\}$ on the space $l^{\infty}$ of all bounded sequences or on
%the space $c$ of all convergent sequences, is not order continuous. Indeed, let
%$\{x_n\}_{n=1} ^{\infty} \subset c$, where $x_n =\{a_{m,n}\}$ and $a_{m,n} =0$ for $m \le n$ and
 %$a_{m,n} =1$ for $m > n$. Then $x_n \downarrow 0$, but $\|x_n\|_{\infty}=1$ for all $n \in \NN$. 
In particular, we will be interested in  Banach function spaces $L$ such that $L$ and its Banach dual space $L^*$ have order continuous norms. Examples of such spaces are $L^p  (X,\mu)$, $1< p< \infty$, while the space
$L=c_0$ %(equipped with the norm $\|\cdot\|_{\infty}$)
 %of all null convergent sequences 
is an example of a non-reflexive Banach function space, such that $L$ and  $L^*=l^1$ have order continuous
norms.

By an {\it operator} on a Banach function space $L$ we always mean a linear
operator on $L$.  An operator $A$ on $L$ is said to be {\it positive} 
if it maps nonnegative functions to nonnegative ones, i.e., $AL_+ \subset L_+$, where $L_+$ denotes the positive cone $L_+ =\{f\in L : f\ge 0 \; \mathrm{a.e.}\}$.
Given operators $A$ and $B$ on $L$, we write $A \ge B$ if the operator $A - B$ is positive.

Recall that a positive  operator $A$ is always bounded, i.e., its operator norm
\be
\|A\|=\sup\{\|Ax\|_L : x\in L, \|x\|_L \le 1\}=\sup\{\|Ax\|_L : x\in L_+, \|x\|_L \le 1\}
\label{equiv_op}
\ee
is finite.  
Also, its spectral radius $\rho (A)$ is always contained in the spectrum.

An operator $A$ on a Banach function space $L$ is called a {\it kernel operator} if
there exists a $\mu \times \mu$-measurable function
$a(x,y)$ on $X \times X$ such that, for all $f \in L$ and for almost all $x \in X$,
$$ \int_X |a(x,y) f(y)| \, d\mu(y) < \infty \ \ \ {\rm and} \ \ 
   (Af)(x) = \int_X a(x,y) f(y) \, d\mu(y)  .$$
One can check that a kernel operator $A$ is positive iff 
its kernel $a$ is non-negative almost everywhere. 

Let $L$ be a Banach function space such that $L$ and $L^*$ have order
continuous norms and let $A$ and $B$ be  positive kernel operators on $L$. By $\beta (A)$ we denote the Hausdorff measure of 
non-compactness of $A$, i.e., 
$$\beta (A) = \inf\left\{ \delta >0 : \;\; \mathrm{there}\;\; \mathrm{is} \;\; \mathrm{a}\;\; \mathrm{finite}\;\; M \subset L \;\;\mathrm{such} \;\; \mathrm{that} \;\; A(D_L) \subset M + \delta D_L  \right\},$$
where $D_L =\{f\in L : \|f\|_L \le 1\}$. Then $\beta (A) \le \|A\|$, $\beta (A+B) \le \beta (A) + \beta (B)$, $\beta(AB) \le \beta (A)\beta (B)$ and $\beta (\alpha A) =\alpha \beta (A)$ for $\alpha \ge 0$. Also 
$0 \le A\le B$  implies $\beta (A) \le \beta (B)$ (see e.g. \cite[Corollary 4.3.7 and Corollary 3.7.3]{Me91}). Let $\rho _{ess} (A)$ denote the essential spectral radius of $A$, i.e., the spectral radius of the Calkin image of $A$ in the Calkin algebra. Then 
\be
 \rho _{ess} (A) =\lim _{j \to \infty} \beta (A^j)^{1/j}=\inf _{j \in \NN} \beta (A^j)^{1/j} 
\label{esslim=inf}
\ee
and $\rho _{ess} (A) \le \beta (A)$. Note that (\ref{esslim=inf}) is valid for any bounded operator $A$ on a given complex Banach space $L$ (see e.g. \cite[Theorem 4.3.13]{Me91}).

% $M_0(X,\mu)$, such that $X$ is the carrier of $L$ and let $L$ and $L^*$ have order
%continuous norms. Let $\phi$, $\alpha _1, \ldots , \alpha _m$ and $C$ be as in Example \ref{locrad}. Define 
%$h(k)=\beta (K)$ for $k\in C$  and $h (k)=\infty$ for $k \notin C$. Here $\beta (K)$ denotes the measure of 
%non-compactness, i.e., 
%$$\beta (K) = \inf\left\{ \delta >0 : \;\; \mathrm{there}\;\; \mathrm{is} \;\; \mathrm{a}\;\; \mathrm{finite}\;\; M \subset L \;\;\mathrm{such} \;\; \mathrm{that} \;\; K(B_L) \subset M + \delta B_L  \right\},$$
%where $B_L =\{f\in L : \|f\|_L \le 1\}$.
%Then $h:M(X \times X)_+ \to [0, \infty]$ is a function seminorm (see e.g. \cite[Corollary 4.3.7 and Corollary 3.7.3]{Me91}),
%$C = C_h$ and for $k \in C$ we have
%$$r_h (k) = \lim _{j \to \infty} \beta (K^j)^{1/j}=\inf _{j \in \NN} \beta (K^j)^{1/j} =r_{ess} (K)$$
%(the essential spectral radius of $K$; see e.g. \cite[Theorem 4.3.13]{Me91}).

Observe that (finite or infinite) non-negative matrices, that define operators on Banach sequence spaces, are a special case of positive kernel operators 
(see e.g. \cite{P12}, \cite{DP16}, \cite{DP10}, \cite{P11} and the references cited there).  It is well-known that kernel operators play a very important, often even central, role in a variety of applications from differential and integro-differential equations, problems from physics 
(in particular from thermodynamics), engineering, statistical and economic models, etc (see e.g. \cite{J82}, \cite{BP03}, \cite{LL05}, \cite{DLR13} % , \cite{HR12}, \cite{AH83}, \cite{KS82}, \cite{AK71},  \cite{W75} 
and the references cited there).
For the theory of Banach function spaces and more general Banach lattices we refer the reader to the books \cite{Za83}, \cite{BS88}, \cite{AA02}, \cite{AB85}, \cite{Me91}. 

Let $A$ and $B$ be positive kernel operators on a Banach function space $L$ with kernels $a$ and $b$ respectively,
and $\alpha \ge 0$.
The \textit{Hadamard (or Schur) product} $A \circ B$ of $A$ and $B$ is the kernel operator
with kernel equal to $a(x,y)b(x,y)$ at point $(x,y) \in X \times X$ which can be defined (in general) 
only on some order ideal of $L$. Similarly, the \textit{Hadamard (or Schur) power} 
$A^{(\alpha)}$ of $A$ is the kernel operator with kernel equal to $(a(x, y))^{\alpha}$ 
at point $(x,y) \in X \times X$ which can be defined only on some order ideal of $L$.

Let $A_1 ,\ldots, A_n$ be positive kernel operators on a Banach function space $L$, 
and $\alpha _1, \ldots, \alpha _n$ positive numbers such that $\sum_{j=1}^n \alpha _j = 1$.
Then the {\it  Hadamard weighted geometric mean} 
$A = A_1 ^{( \alpha _1)} \circ A_2 ^{(\alpha _2)} \circ \cdots \circ A_n ^{(\alpha _n)}$ of 
the operators $A_1 ,\ldots, A_n$ is a positive kernel operator defined 
on the whole space $L$, since $A \le \alpha _1 A_1 + \alpha _2 A_2 + \ldots + \alpha _n A_n$ by the inequality between the weighted arithmetic and geometric means. Let us recall  the following result which was proved in \cite[Theorem 2.2]{DP05} and 
\cite[Theorem 5.1 and Example 3.7]{P06} (see also e.g. \cite[Theorem 2.1]{P17}).

\begin{theorem} 
Let $\{A_{i j}\}_{i=1, j=1}^{k, m}$ be positive kernel operators on a Banach function space $L$.
If $\alpha _1$, $\alpha _2$,..., $\alpha _m$ are positive numbers  
such that $\sum_{j=1}^m \alpha _j = 1$, then the positive kernel operator
$$A:= \left(A_{1 1}^{(\alpha _1)} \circ \cdots \circ A_{1 m}^{(\alpha _m)}\right) \ldots \left(A_{k 1}^{(\alpha _1)} \circ \cdots \circ A_{k m}^{(\alpha _m)} \right)$$
%then the inequalities%(\ref{basic2}), (\ref{norm2}) and (\ref{spectral2}) hold.
satisfies the following inequalities
\begin{eqnarray}
\label{basic2}
A &\le &  
(A_{1 1} \cdots  A_{k 1})^{(\alpha _1)} \circ \cdots 
\circ (A_{1 m} \cdots A_{k m})^{(\alpha _m)} , \\
\label{norm2}
\left\|A \right\| &\le &  
\|A_{1 1} \cdots  A_{k 1}\|^{\alpha _1} \cdots \|A_{1 m} \cdots A_{k m}\|^{\alpha _m}, \\ 
\label{spectral2}
\rho \left(A \right) &\le  &
\rho \left( A_{1 1} \cdots  A_{k 1} \right)^{\alpha _1} \cdots 
\rho \left( A_{1 m} \cdots A_{k m}\right)^{\alpha _m} .
\end{eqnarray}
If, in addition, $L$ and $L^*$ have order continuous norms, then
\begin{eqnarray}
\label{meas_noncomp}
\beta (A) &\le &  
\beta (A_{1 1} \cdots  A_{k 1})^{\alpha _1} \cdots \beta(A_{1 m} \cdots A_{k m})^{\alpha _m}, \\ 
\label{ess_spectral}
\rho _{ess} \left(A \right) &\le  &
\rho _{ess} \left( A_{1 1} \cdots  A_{k 1} \right)^{\alpha _1} \cdots 
\rho _{ess} \left( A_{1 m} \cdots A_{k m}\right)^{\alpha _m} .
\end{eqnarray}

\label{DPBfs}
\end{theorem}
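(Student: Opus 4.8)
The plan is to deduce the entire chain of estimates from one elementary fact — the weighted arithmetic--geometric mean inequality $\prod_{j=1}^{m}t_{j}^{\alpha_{j}}\le\sum_{j=1}^{m}\alpha_{j}t_{j}$ for $t_{j}\ge0$ — applied at the level of kernels, together with the monotonicity, positive homogeneity and subadditivity of the operator norm $\|\cdot\|$ and of the Hausdorff measure of noncompactness $\beta(\cdot)$, and the Gelfand-type limit formulas for $\rho$ and $\rho_{ess}$. First I would establish the pointwise inequality (\ref{basic2}). Write $a_{ij}$ for the kernel of $A_{ij}$. Each Hadamard weighted geometric mean $A_{i1}^{(\alpha_{1})}\circ\cdots\circ A_{im}^{(\alpha_{m})}$ is a positive kernel operator defined on all of $L$ (being dominated by $\sum_{j}\alpha_{j}A_{ij}$), hence so is the product $A$, and by Tonelli's theorem its kernel at $(x_{0},x_{k})$ equals
\[
\int_{X}\!\!\cdots\!\!\int_{X}\prod_{i=1}^{k}\Big(\prod_{j=1}^{m}a_{ij}(x_{i-1},x_{i})^{\alpha_{j}}\Big)\,d\mu(x_{1})\cdots d\mu(x_{k-1}).
\]
Regrouping the integrand as $\prod_{j=1}^{m}\big(\prod_{i=1}^{k}a_{ij}(x_{i-1},x_{i})\big)^{\alpha_{j}}$ and applying the generalized H\"older inequality on the product space $X^{k-1}$ with exponents $1/\alpha_{j}$ (which, after normalizing each factor to have integral one, is just the AM--GM inequality above), one bounds the displayed integral by $\prod_{j=1}^{m}\big(\int_{X^{k-1}}\prod_{i=1}^{k}a_{ij}(x_{i-1},x_{i})\big)^{\alpha_{j}}$; the $j$-th integral here is exactly the kernel of $A_{1j}A_{2j}\cdots A_{kj}$ at $(x_{0},x_{k})$, which gives (\ref{basic2}).

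Next I would pass from (\ref{basic2}) to (\ref{norm2}) and (\ref{meas_noncomp}). Set $D_{j}=A_{1j}\cdots A_{kj}$ and $B=D_{1}^{(\alpha_{1})}\circ\cdots\circ D_{m}^{(\alpha_{m})}$, so $A\le B$. For arbitrary $\lambda_{1},\dots,\lambda_{m}>0$ the AM--GM inequality applied to the kernel of $B$ yields the operator inequality $B\le\big(\prod_{j}\lambda_{j}^{-\alpha_{j}}\big)\sum_{j}\alpha_{j}\lambda_{j}D_{j}$. Applying $\|\cdot\|$ and then choosing $\lambda_{j}=1/\|D_{j}\|$ gives $\|A\|\le\|B\|\le\prod_{j}\|D_{j}\|^{\alpha_{j}}$, which is (\ref{norm2}). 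When $L$ and $L^{*}$ have order continuous norms, the same computation with $\beta(\cdot)$ in place of $\|\cdot\|$ — legitimate by the properties of $\beta$ recalled in Section~2 — gives $\beta(A)\le\beta(B)\le\prod_{j}\beta(D_{j})^{\alpha_{j}}$, which is (\ref{meas_noncomp}). (If some $\|D_{j}\|$ or $\beta(D_{j})$ equals $0$, let the corresponding $\lambda_{j}\to\infty$, forcing the left-hand side to $0$.)

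For (\ref{spectral2}) and (\ref{ess_spectral}) I would apply the preceding steps to powers. For each $n\in\NN$, $A^{n}$ has the same form as in the theorem, with $k$ replaced by $kn$ and the $j$-th column product equal to $D_{j}^{n}$ (repeat the factors cyclically); hence Steps 1--2 applied to $A^{n}$ give $\|A^{n}\|\le\prod_{j}\|D_{j}^{n}\|^{\alpha_{j}}$ and $\beta(A^{n})\le\prod_{j}\beta(D_{j}^{n})^{\alpha_{j}}$. Taking $n$-th roots and letting $n\to\infty$, Gelfand's formula $\rho(T)=\lim_{n}\|T^{n}\|^{1/n}$ yields (\ref{spectral2}), while $\rho_{ess}(T)=\lim_{n}\beta(T^{n})^{1/n}$ from (\ref{esslim=inf}) yields (\ref{ess_spectral}).

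The only genuinely delicate point is the first step: one must justify the Tonelli interchange (all integrands are nonnegative), observe that every integral occurring is finite because the operators involved are honest bounded operators rather than merely densely defined, and handle the joint measurability of the kernels; once the kernel formula for $A$ is in hand, the H\"older estimate and all of Steps 2--3 are routine.
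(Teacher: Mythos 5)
Your proposal is correct: the kernel-level generalized H\"older inequality gives (\ref{basic2}), the scaling/AM--GM trick gives (\ref{norm2}) and (\ref{meas_noncomp}), and passing to powers with the Gelfand-type formulas $\rho(T)=\lim_n\|T^n\|^{1/n}$ and (\ref{esslim=inf}) gives (\ref{spectral2}) and (\ref{ess_spectral}). The paper itself does not prove Theorem \ref{DPBfs} but recalls it from \cite{DP05} and \cite{P06}, and your argument is essentially the standard one given there (those references organize the H\"older step as an induction on $k$ via a two-factor lemma, whereas you do the $(k-1)$-fold integral in one stroke, but this is an inessential difference).
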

The following result is a special case  of Theorem \ref{DPBfs}. 
\begin{theorem} 
\label{special_case}
Let $A_1 ,\ldots, A_m$ be positive kernel operators on a Banach function space  $L$,
and $\alpha _1, \ldots, \alpha _m$ positive numbers such that $\sum_{j=1}^m \alpha _j = 1$.
Then we have
\be
 \|A_1 ^{( \alpha _1)} \circ A_2 ^{(\alpha _2)} \circ \cdots \circ A_m ^{(\alpha _m)} \| \le
  \|A_1\|^{ \alpha _1}  \|A_2\|^{\alpha _2} \cdots \|A_m\|^{\alpha _m}  
\label{gl1nrm}
\ee
and
\be
 \rho(A_1 ^{( \alpha _1)} \circ A_2 ^{(\alpha _2)} \circ \cdots \circ A_m ^{(\alpha _m)} ) \le
\rho(A_1)^{ \alpha _1} \, \rho(A_2)^{\alpha _2} \cdots \rho(A_m)^{\alpha _m} .
%L=M
\label{gl1vecr}
\ee
If, in addition, $L$ and $L^*$ have order continuous norms, then
\be
 \beta (A_1 ^{( \alpha _1)} \circ A_2 ^{(\alpha _2)} \circ \cdots \circ A_m ^{(\alpha _m)} )\le
  \beta(A_1)^{ \alpha _1}  \beta(A_2)^{\alpha _2} \cdots \beta(A_m)^{\alpha _m}  
\label{gl1meas_nonc}
\ee
and
\be
 \rho _{ess}(A_1 ^{( \alpha _1)} \circ A_2 ^{(\alpha _2)} \circ \cdots \circ A_m ^{(\alpha _m)} ) \le
\rho _{ess }(A_1)^{ \alpha _1} \, \rho _{ess}(A_2)^{\alpha _2} \cdots \rho _{ess}(A_m)^{\alpha _m} .
%L=M
\label{gl1vecr}
\ee

\end{theorem}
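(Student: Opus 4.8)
The plan is to obtain Theorem \ref{special_case} immediately as the special case $k=1$ of Theorem \ref{DPBfs}. First I would set $k=1$ in Theorem \ref{DPBfs} and relabel $A_{1j}=:A_j$ for $j=1,\ldots,m$. With $k=1$ each of the products $A_{1j}\cdots A_{kj}$ occurring in the conclusion of Theorem \ref{DPBfs} collapses to the single operator $A_j$, and the operator
$$A=\left(A_{11}^{(\alpha_1)}\circ\cdots\circ A_{1m}^{(\alpha_m)}\right)\cdots\left(A_{k1}^{(\alpha_1)}\circ\cdots\circ A_{km}^{(\alpha_m)}\right)$$
becomes exactly the Hadamard weighted geometric mean $A_1^{(\alpha_1)}\circ\cdots\circ A_m^{(\alpha_m)}$. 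As recalled in the Preliminaries, this is a positive kernel operator defined on all of $L$, being dominated by $\alpha_1 A_1+\cdots+\alpha_m A_m$ by the inequality between the weighted arithmetic and geometric means; in particular $\|A\|$, $\rho(A)$, $\beta(A)$ and $\rho_{ess}(A)$ are all meaningful. Under these identifications, inequality (\ref{norm2}) of Theorem \ref{DPBfs} becomes the asserted norm inequality and (\ref{spectral2}) becomes the asserted spectral radius inequality; when in addition $L$ and $L^*$ have order continuous norms, (\ref{meas_noncomp}) and (\ref{ess_spectral}) become the asserted measure of noncompactness and essential spectral radius inequalities. (Inequality (\ref{basic2}) degenerates to the trivial identity $A=A$ here and plays no role.)

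I do not expect any genuine obstacle: the only points to verify are that $k=1$ is permitted in the hypotheses of Theorem \ref{DPBfs} --- it is, since no lower bound on $k$ is imposed --- and that the Hadamard geometric mean on the left-hand sides is globally defined on $L$, which is recorded above. Should a self-contained argument be preferred, one could run the $k=1$ case of the reasoning behind Theorem \ref{DPBfs}: for $f\in L_+$ the pointwise identity $\prod_{j=1}^m\left(a_j(x,y)f(y)\right)^{\alpha_j}=\left(\prod_{j=1}^m a_j(x,y)^{\alpha_j}\right)f(y)$ combined with H\"{o}lder's inequality with exponents $1/\alpha_1,\ldots,1/\alpha_m$ gives the pointwise estimate $\left(A_1^{(\alpha_1)}\circ\cdots\circ A_m^{(\alpha_m)}\right)f\le\prod_{j=1}^m(A_jf)^{\alpha_j}$; a generalized H\"{o}lder inequality in $L$ then yields the norm bound, and applying the estimate to the iterated operators together with the submultiplicativity and monotonicity of $\rho$, $\beta$ and $\rho_{ess}$ recorded in the Preliminaries yields the spectral, measure of noncompactness and essential spectral versions. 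Since this reasoning is exactly the content of \cite{DP05} and \cite{P06}, invoking Theorem \ref{DPBfs} directly is the economical route.
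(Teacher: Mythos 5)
Your proposal is correct and matches the paper exactly: the paper simply states that Theorem \ref{special_case} is the special case of Theorem \ref{DPBfs}, which is precisely your $k=1$ specialization. Your additional remarks on well-definedness of the Hadamard geometric mean and the optional self-contained argument are consistent with the cited sources but not needed beyond the direct invocation.
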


Recall also that the above results on the spectral radius and operator norm  remain valid under the less restrictive assumption $\sum_{j=1}^m \alpha _j \ge 1$ in the case of  (finite or infinite) non-negative matrices that define operators on sequence spaces 
(\cite{EJS88}, \cite{DP05}, \cite{P06}, \cite{P11}, \cite{P12}, \cite{DP16}).

Let $\Sigma$ be a bounded set of bounded operators on a complex Banach space $L$.
%$n \times n$ complex matrices. 
For $m \ge 1$, let 
$$\Sigma ^m =\{A_1A_2 \cdots A_m : A_i \in \Sigma\}.$$
The generalized spectral radius of $\Sigma$ is defined by
\be
\rho (\Sigma)= \limsup _{m \to \infty} \;[\sup _{A \in \Sigma ^m} \rho (A)]^{1/m}
\label{genrho}
\ee
and is equal to 
$$\rho (\Sigma)= \sup _{m \in \NN} \;[\sup _{A \in \Sigma ^m} \rho (A)]^{1/m}.$$
The joint spectral radius of $\Sigma$ is defined by
%It was shown in \cite{BW92} that $\rho (\Sigma)$ is equal to the joint spectral radius of $\Sigma$, i.e.,
\be
\hat{\rho}  (\Sigma)= \lim _{m \to \infty}[\sup _{A \in \Sigma ^m} \|A\|]^{1/m}.
\label{BW}
\ee
%where $\|\cdot\|$ is any vector norm on $\CC ^{n\times n}$. 
Similarly, the generalized essential spectral radius of $\Sigma$ is defined by
\be
\rho _{ess} (\Sigma)= \limsup _{m \to \infty} \;[\sup _{A \in \Sigma ^m} \rho _{ess} (A)]^{1/m}
\label{genrhoess}
\ee
and is equal to 
$$\rho _{ess} (\Sigma)= \sup _{m \in \NN} \;[\sup _{A \in \Sigma ^m} \rho _{ess} (A)]^{1/m}.$$
The joint essential  spectral radius of $\Sigma$ is defined by
%It was shown in \cite{BW92} that $\rho (\Sigma)$ is equal to the joint spectral radius of $\Sigma$, i.e.,
\be
\hat{\rho} _{ess}  (\Sigma)= \lim _{m \to \infty}[\sup _{A \in \Sigma ^m} \beta (A)]^{1/m}.
\label{jointess}
\ee

It is well known that $\rho (\Sigma)= \hat{\rho}  (\Sigma)$ for a precompact nonempty set $\Sigma$ of compact operators on $L$ (see e.g. \cite{ShT00}, \cite{ShT08}, \cite{Mo}), 
in particular for a bounded set of complex $n\times n$ matrices (see e.g. \cite{BW92}, \cite{E95}, \cite{SWP97}, \cite{Dai11}, \cite{MP12}).
This equality is called the Berger-Wang formula or also the 
generalized spectral radius theorem (for an elegant proof in the finite dimensional case see \cite{Dai11}).
It is perhaps less well known that also the generalized Berger-Wang formula holds, i.e, that for any precompact nonempty  set $\Sigma$ of bounded operators on $L$ we have
$$\hat{\rho}  (\Sigma) = \max \{\rho (\Sigma), \hat{\rho} _{ess}  (\Sigma)\}$$
(see e.g.  \cite{ShT08}, \cite{Mo},  \cite{ShT00}). Observe also that it was proved in \cite{Mo} that in the definition of  $\hat{\rho} _{ess}  (\Sigma)$ one may replace the Haussdorf measure of noncompactness by several other seminorms, for instance it may be replaced by the essential norm.

In general $\rho (\Sigma)$ and $\hat{\rho}  (\Sigma)$ may differ even in the case of a bounded set $\Sigma$ of compact positive operators on $L$ (see \cite{SWP97} or also \cite{P17}).
 %as the 
%following example from \cite{SWP97} shows. Let
%$\Sigma =\{A_1, A_2, \ldots \}$ be a bounded set of compact operators on $L=l^2$ defined by $A_k e_k =e_{k+1}, (k \in \NN) $ and 
%$A_k e_j =0$ for $j\neq k$.
%%$$A_k e_k =e_{k+1}, \;\;\; k \in \NN ,$$
%%$$A_k e_j =0, \;\;\; j\neq k .$$
%Then $(A_{i_1}A_{i_2}\cdots A_{i_k})^2=0$ for arbitrary $k \in \NN$ and any subset $\{i_1, i_2, \ldots, i_k\} \subset \NN$.
%%$$(A_{i_1}A_{i_2}\cdots A_{i_k})^2=0.$$
%Thus $\rho(\Sigma)=0$. Since 
%$$A_mA_{m-1} \cdots A_1 e_1 =e_{m+1}, \;\;\; m \in \NN , $$
%$$A_mA_{m-1} \cdots A_1 e_j =0, \;\;\; j\neq 1 ,$$
%we have
%$\hat{\rho}(\Sigma)\ge \limsup _{m \to \infty} \|A_m \cdots A_1\| ^{1/m}=1$
%and so $\rho(\Sigma) \neq \hat{\rho}(\Sigma)$.
Also, in \cite{Gui82} the reader can find an example of two positive non-compact weighted shifts $A$ and $B$ on $L=l^2$ such that $\rho(\{A,B\})=0 < \hat{\rho}(\{A,B\})$. As already noted in \cite{ShT00} also $\rho _{ess} (\Sigma)$ and $\hat{\rho} _{ess}  (\Sigma)$ may in general be different.

%Since then many different type of proofs of (\ref{BW}) were obtained (for references see e.g. \cite{L06}).
The theory of the generalized and the joint spectral radius has many important applications for instance to discrete and differential inclusions, 
wavelets, invariant subspace theory
(see e.g. \cite{BW92}, \cite{Dai11}, \cite{Wi02}, \cite{ShT00}, \cite{ShT08} and the references cited there).
In particular, $\hat{\rho} (\Sigma)$ plays a central role in determining stability in convergence properties of discrete and differential inclusions. In this 
theory the quantity $\log \hat{\rho} (\Sigma)$ is known as the maximal Lyapunov exponent (see e.g. \cite{Wi02}).

We will  use the following well known facts that hold for all $r \in \{\rho,  \hat{\rho}, \rho _{ess}, \hat{\rho} _{ess}  \}$: 
$$r (\Sigma  ^m) = r (\Sigma)^m \;\;\mathrm{and}\;\; 
%\hat{\rho} (\Sigma  ^m) = \hat{\rho} (\Sigma)^m ,\;\;   
r (\Psi \Sigma) = r (\Sigma\Psi) $$
%\;\;\mathrm{and}\;\; \hat{\rho} (\Psi \Sigma) = \hat{\rho} (\Sigma\Psi),$$
where $\Psi \Sigma =\{AB: A\in \Psi, B\in \Sigma\}$ and $m\in \NN$.

Let $\Psi _1, \ldots , \Psi _m$ be bounded sets of positive kernel operators on a Banach function space $L$ and let $\alpha _1, \ldots \alpha _m$ be positive numbers such that 
$\sum _{i=1} ^m \alpha _i = 1$. Then the bounded set of positive kernel operators on $L$, defined by
$$\Psi _1 ^{( \alpha _1)} \circ \cdots \circ \Psi _m ^{(\alpha _m)}=\{ A_1 ^{( \alpha _1)} \circ \cdots \circ A _m ^{(\alpha _m)}: A_1\in \Psi _1, \ldots, A_m \in \Psi _m \},$$
is called the {\it weighted Hadamard (Schur) geometric mean} of sets $\Psi _1, \ldots , \Psi _m$. The set  
$\Psi _1 ^{(\frac{1}{m})} \circ \cdots \circ \Psi _m ^{(\frac{1}{m})}$ is called the  {\it Hadamard (Schur) geometric mean} of sets $\Psi _1, \ldots , \Psi _m$.

The folowing result (\cite[Theorem 3.3]{P17}; see also \cite{P12}, \cite{P06}) follows from   Theorem \ref{DPBfs}.

%A version of the following result on the generalized and the joint spectral radius was stated in \cite[Theorem 3.4]{P12} and \cite[Corollary 5.3]{P06} only in the case of bounded sets  of  
%non-negative matrices that define operators on Banach sequence spaces, 
%however the same proof works in our more general setting by applying the inequalities (\ref{spectral2}) and (\ref{norm2}). The proof is included for the convenience of the reader.

\begin{theorem}
Let $\Psi _1, \ldots \Psi _m$ be bounded sets of positive kernel operators on a Banach function space $L$ and let 
 $\alpha _1, \ldots \alpha _m$ be positive numbers such that \\
$\sum _{i=1} ^m \alpha _i = 1$.
Then we have 
\be
\rho (\Psi _1 ^{( \alpha _1)} \circ \cdots \circ \Psi _m ^{(\alpha _m)} ) \le 
\rho (\Psi _1)^{ \alpha _1} \, \cdots \rho(\Psi _m)^{\alpha _m} 
\label{gsh}
\ee
and
\be
\hat{\rho } (\Psi _1 ^{( \alpha _1)} \circ \cdots \circ \Psi _m ^{(\alpha _m)} ) \le 
\hat{\rho }  (\Psi _1)^{ \alpha _1} \, \cdots \hat{\rho } (\Psi _m)^{\alpha _m}. 
\label{gsh2}
\ee
\label{family}
\end{theorem}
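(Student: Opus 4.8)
The plan is to reduce everything to Theorem \ref{DPBfs}, applied to products of a fixed length, and then to pass to the limit in the definitions of $\rho$ and $\hat{\rho}$. Write $\Sigma := \Psi _1 ^{( \alpha _1)} \circ \cdots \circ \Psi _m ^{(\alpha _m)}$ and fix $k \in \NN$. A typical element of $\Sigma ^k$ is a product
$$A = \left(A_{1 1}^{(\alpha _1)} \circ \cdots \circ A_{1 m}^{(\alpha _m)}\right) \cdots \left(A_{k 1}^{(\alpha _1)} \circ \cdots \circ A_{k m}^{(\alpha _m)}\right)$$
with $A_{i j} \in \Psi _j$ for all $i,j$. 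First I would apply inequalities (\ref{norm2}) and (\ref{spectral2}) of Theorem \ref{DPBfs} to this $A$, obtaining $\|A\| \le \prod_{j=1}^m \|A_{1 j} \cdots A_{k j}\|^{\alpha _j}$ and $\rho(A) \le \prod_{j=1}^m \rho(A_{1 j} \cdots A_{k j})^{\alpha _j}$. Since $A_{1 j} \cdots A_{k j} \in \Psi _j ^k$ for each $j$, taking the supremum over all admissible choices of the $A_{i j}$ gives
$$\sup_{A \in \Sigma ^k} \|A\| \le \prod_{j=1}^m \Big(\sup_{B \in \Psi _j ^k} \|B\|\Big)^{\alpha _j} \quad\text{and}\quad \sup_{A \in \Sigma ^k} \rho(A) \le \prod_{j=1}^m \Big(\sup_{B \in \Psi _j ^k} \rho(B)\Big)^{\alpha _j}.$$

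Next I would raise these to the power $1/k$ and take limits. For (\ref{gsh}) this is immediate: by the identity $\rho(\Psi _j) = \sup_{k \in \NN}[\sup_{B \in \Psi _j ^k} \rho(B)]^{1/k}$ recalled in Section 2 we have $\big(\sup_{B \in \Psi _j ^k} \rho(B)\big)^{1/k} \le \rho(\Psi _j)$ for every $j$ and every $k$, hence $\big(\sup_{A \in \Sigma ^k} \rho(A)\big)^{1/k} \le \prod_{j=1}^m \rho(\Psi _j)^{\alpha _j}$ for every $k$, and taking the supremum over $k$ yields (\ref{gsh}). For (\ref{gsh2}) I would instead let $k \to \infty$: the sequence $k \mapsto \sup_{A \in \Sigma ^k}\|A\|$ is submultiplicative because $\Sigma ^{k+l} \subseteq \Sigma ^k \Sigma ^l$ and the operator norm is submultiplicative, so the limit in (\ref{BW}) exists, and likewise for each $\Psi _j$; passing to the limit in the displayed norm inequality then gives $\hat{\rho}(\Sigma) \le \prod_{j=1}^m \hat{\rho}(\Psi _j)^{\alpha _j}$, which is (\ref{gsh2}).

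I do not expect any serious obstacle, since this is essentially \cite[Theorem 3.3]{P17}. The only point requiring a little care is the interchange of the limit (or supremum) over $k$ with the finite product on the right-hand side: for $\hat{\rho}$ this is legitimate because each of the finitely many factors converges, and for $\rho$ it is cleanest to bypass a $\limsup$-of-a-product argument entirely by using the supremum characterisation of the generalized spectral radius, as above. Everything else is a direct substitution into Theorem \ref{DPBfs}.
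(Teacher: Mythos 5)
Your proposal is correct and follows essentially the same route as the paper, which simply notes that this result (quoted from \cite[Theorem 3.3]{P17}) follows from Theorem \ref{DPBfs}: one applies (\ref{norm2}) and (\ref{spectral2}) to the elements of $\left(\Psi _1 ^{( \alpha _1)} \circ \cdots \circ \Psi _m ^{(\alpha _m)}\right)^k$ and passes to the supremum, respectively the limit, over $k$. Your care with the supremum characterisation of $\rho$ versus the limit in (\ref{BW}) for $\hat\rho$ is exactly the right way to handle the interchange.
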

The following result was the main result of \cite{P17} (see  \cite[Theorem 3.4]{P17}).
\begin{theorem} Let $\Psi _1, \ldots \Psi _m$ be bounded sets of positive kernel operators on a Banach function space $L$.
Then we have
\be
\rho \left(\Psi _1 ^{\left( \frac{1}{m} \right)} \circ \cdots \circ \Psi _m  ^{\left( \frac{1}{m} \right)} \right) \le \rho (\Psi _1 \Psi _2 \cdots \Psi _m) ^{\frac{1}{m}}
\label{Hurho}
\ee
and
\be
\hat{\rho}  \left(\Psi _1 ^{\left( \frac{1}{m} \right)} \circ \cdots \circ \Psi _m  ^{\left( \frac{1}{m} \right)} \right) \le \hat{\rho}   (\Psi _1 \Psi _2 \cdots \Psi _m) ^{\frac{1}{m}}.
\label{Hurhoh}
\ee
\label{genHuDP}
\end{theorem}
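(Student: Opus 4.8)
\noindent\emph{Proof plan.} Set $\Sigma := \Psi_1^{(1/m)} \circ \Psi_2^{(1/m)} \circ \cdots \circ \Psi_m^{(1/m)}$ and, for $j \in \{1,\ldots,m\}$, let $\Phi_j := \Psi_j \Psi_{j+1} \cdots \Psi_m \Psi_1 \cdots \Psi_{j-1}$ be the $j$-th cyclic rotation of $\Psi_1 \Psi_2 \cdots \Psi_m$ (so $\Phi_1 = \Psi_1 \Psi_2 \cdots \Psi_m$). The first thing I would record is that, since $\Phi_j = (\Psi_j \cdots \Psi_m)(\Psi_1 \cdots \Psi_{j-1})$ while $\Psi_1 \Psi_2 \cdots \Psi_m = (\Psi_1 \cdots \Psi_{j-1})(\Psi_j \cdots \Psi_m)$, the identity $r(\Psi\Sigma) = r(\Sigma\Psi)$ for $r \in \{\rho,\hat\rho\}$ gives
$$\rho(\Phi_j) = \rho(\Psi_1 \Psi_2 \cdots \Psi_m), \qquad \hat\rho(\Phi_j) = \hat\rho(\Psi_1 \Psi_2 \cdots \Psi_m) \qquad (j = 1, \ldots, m).$$
The plan is then, for each fixed $k \in \NN$, to compare $\Sigma^{km}$ with the powers $\Phi_1^k, \ldots, \Phi_m^k$ using Theorem \ref{DPBfs}, and afterwards to deduce the assertions from $r(\Sigma^m) = r(\Sigma)^m$.

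First I would fix $k \in \NN$ and an arbitrary $A \in \Sigma^{km}$; by definition $A = \prod_{i=1}^{km}\bigl(A_{i,1}^{(1/m)} \circ A_{i,2}^{(1/m)} \circ \cdots \circ A_{i,m}^{(1/m)}\bigr)$ with $A_{i,j} \in \Psi_j$ for all $i, j$. The key manoeuvre is to re-index the Hadamard slots so that Theorem \ref{DPBfs} outputs exactly the cyclic products $\Phi_j$: for each $i$ I would take the permutation $\sigma_i$ of $\{1,\ldots,m\}$ defined by $\sigma_i(j) = \bigl((i+j-2)\bmod m\bigr)+1$ --- a cyclic shift, hence a genuine bijection for each fixed $i$ --- and set $B_{i,j} := A_{i,\sigma_i(j)}$. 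Since the Hadamard product is commutative and $\sigma_i$ is a permutation, $B_{i,1}^{(1/m)} \circ \cdots \circ B_{i,m}^{(1/m)} = A_{i,1}^{(1/m)} \circ \cdots \circ A_{i,m}^{(1/m)}$ for every $i$, so $A = \prod_{i=1}^{km}\bigl(B_{i,1}^{(1/m)} \circ \cdots \circ B_{i,m}^{(1/m)}\bigr)$ as well, and applying Theorem \ref{DPBfs} with $km$ rows, $m$ columns and all weights $1/m$, inequalities (\ref{spectral2}) and (\ref{norm2}) give
$$\rho(A) \le \prod_{j=1}^m \rho\bigl(B_{1,j} B_{2,j} \cdots B_{km,j}\bigr)^{1/m}, \qquad \|A\| \le \prod_{j=1}^m \bigl\|B_{1,j} B_{2,j} \cdots B_{km,j}\bigr\|^{1/m}.$$

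Next I would identify the column products. By the choice of $\sigma_i$ one has $B_{i,j} \in \Psi_{\sigma_i(j)} = \Psi_{((i+j-2)\bmod m)+1}$, so for a fixed column $j$ the operators $B_{1,j}, B_{2,j}, \ldots, B_{m,j}$ come, in this order, from $\Psi_j, \Psi_{j+1}, \ldots, \Psi_m, \Psi_1, \ldots, \Psi_{j-1}$, and this block of $m$ factors then repeats identically over each of the $k$ consecutive groups of $m$ indices $i$. Hence $Q_j := B_{1,j} B_{2,j} \cdots B_{km,j} \in \Phi_j^k$, so that $\rho(Q_j) \le \rho(\Phi_j^k) = \rho(\Phi_j)^k = \rho(\Psi_1 \cdots \Psi_m)^k$ and $\|Q_j\| \le \sup_{Q \in \Phi_j^k}\|Q\|$. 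Plugging these into the last display and taking suprema over $A \in \Sigma^{km}$, I would obtain
$$\sup_{A \in \Sigma^{km}} \rho(A) \le \rho(\Psi_1 \cdots \Psi_m)^k, \qquad \sup_{A \in \Sigma^{km}} \|A\| \le \prod_{j=1}^m \Bigl(\sup_{Q \in \Phi_j^k}\|Q\|\Bigr)^{1/m}.$$

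Finally I would pass to the radii. For (\ref{Hurho}): using $r(\Sigma^m) = r(\Sigma)^m$ and the formula $\rho(\Gamma) = \sup_{k}[\sup_{A\in\Gamma^k}\rho(A)]^{1/k}$ with $\Gamma = \Sigma^m$ and $(\Sigma^m)^k = \Sigma^{km}$, the first bound gives $\rho(\Sigma)^m = \rho(\Sigma^m) \le \sup_{k}\bigl[\rho(\Psi_1 \cdots \Psi_m)^k\bigr]^{1/k} = \rho(\Psi_1 \cdots \Psi_m)$, and taking $m$-th roots finishes it. For (\ref{Hurhoh}): $\hat\rho(\Sigma)^m = \hat\rho(\Sigma^m) = \lim_{k\to\infty}\bigl[\sup_{A\in\Sigma^{km}}\|A\|\bigr]^{1/k}$ (the limit existing by the subadditivity underlying the definition of $\hat\rho$), and from the second bound $\bigl[\sup_{A\in\Sigma^{km}}\|A\|\bigr]^{1/k} \le \prod_{j=1}^m\bigl(\bigl[\sup_{Q\in\Phi_j^k}\|Q\|\bigr]^{1/k}\bigr)^{1/m}$, whose right-hand side tends to $\prod_{j=1}^m\hat\rho(\Phi_j)^{1/m} = \hat\rho(\Psi_1 \cdots \Psi_m)$ as $k \to \infty$; hence $\hat\rho(\Sigma) \le \hat\rho(\Psi_1 \cdots \Psi_m)^{1/m}$. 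The step I expect to be the main obstacle is getting the combinatorial re-indexing exactly right --- choosing the $\sigma_i$ so that each is a permutation of the $m$ Hadamard slots of its own factor while, at the same time, every column $j$ reads off the cyclic word $\Psi_j \Psi_{j+1} \cdots \Psi_{j-1}$ repeated $k$ times --- and a secondary subtlety is that, as $\|XY\| \neq \|YX\|$ in general, the cyclic rearrangement of the $\Psi_i$'s can be exploited in the joint-spectral-radius case only through the set identity $\hat\rho(\Phi_j) = \hat\rho(\Psi_1 \cdots \Psi_m)$, that is, only after the passage to the limit in $k$.
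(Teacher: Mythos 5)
Your argument is correct and is essentially the paper's own: Theorem \ref{genHuDP} is quoted from \cite[Theorem 3.4]{P17}, whose proof uses exactly this cyclic re-indexing of the Hadamard slots so that Theorem \ref{DPBfs} yields the cyclic products $\Phi_j=\Psi_j\cdots\Psi_m\Psi_1\cdots\Psi_{j-1}$ in the columns, followed by $r(\Phi_j)=r(\Psi_1\cdots\Psi_m)$ and the passage to the limit in $k$ --- the same mechanism used in the proofs of Theorems \ref{refin} and \ref{powers} in this paper. The fact that your bound passes through the $\Phi_j$'s is precisely the feature the paper exploits to extract Theorem \ref{essPji} and the inequalities (\ref{genHuBfsP}) from that proof.
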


%The following special case of Theorem \ref{genHuDP} generalizes (\ref{Schep}).% in several directions.
\begin{corollary} Let $\Psi $ and $\Sigma$  be bounded sets of positive kernel operators on a Banach function space $L$.
Then we have
\be
\rho \left(\Psi ^{\left( \frac{1}{2} \right)} \circ \Sigma  ^{\left( \frac{1}{2} \right)} \right) \le \rho (\Psi \Sigma) ^{\frac{1}{2}}
\label{Hurho2}
\ee
and
\be
\hat{\rho}  \left(\Psi ^{\left( \frac{1}{2} \right)} \circ \Sigma  ^{\left( \frac{1}{2} \right)} \right) \le \hat{\rho} (\Psi \Sigma) ^{\frac{1}{2}}
\label{Hurhoh2}
\ee
\label{genSch}
\end{corollary}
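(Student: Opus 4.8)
The plan is to read off Corollary~\ref{genSch} as the special case $m=2$ of Theorem~\ref{genHuDP}. Putting $\Psi_1=\Psi$ and $\Psi_2=\Sigma$ in (\ref{Hurho}) and (\ref{Hurhoh}) turns them into (\ref{Hurho2}) and (\ref{Hurhoh2}) verbatim; one only has to record that $\Psi^{(1/2)}\circ\Sigma^{(1/2)}$ is again a bounded set of positive kernel operators (boundedness via (\ref{gl1nrm})) and likewise $\Psi\Sigma$ (via submultiplicativity of the operator norm), so that all four quantities in the statement are well defined and finite. With Theorem~\ref{genHuDP} available, there is nothing left to do, and I expect the author's proof to be essentially one line.

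For a self-contained derivation not quoting Theorem~\ref{genHuDP}, I would go back to Theorem~\ref{DPBfs} and exploit the commutativity of the Hadamard product. Fix $k\in\NN$ and $C=C_1\cdots C_k\in\bigl(\Psi^{(1/2)}\circ\Sigma^{(1/2)}\bigr)^k$ with $C_i=A_i^{(1/2)}\circ B_i^{(1/2)}$, $A_i\in\Psi$, $B_i\in\Sigma$. Since $A_i^{(1/2)}\circ B_i^{(1/2)}=B_i^{(1/2)}\circ A_i^{(1/2)}$ as operators, the $2k$ factors of $C^2$ may be displayed so that an application of (\ref{basic2}) with $2k$ rows and weights $\tfrac12,\tfrac12$ produces two ``column words'' that are each alternating products of length $2k$ in $\Psi$ and $\Sigma$: within each of the two blocks $C_1\cdots C_k$ one takes the $\Psi$-entry from odd-indexed factors and the $\Sigma$-entry from even-indexed ones, flipping this rule on the second block when $k$ is odd so that alternation survives both block boundaries (recall $\rho$ is cyclically invariant). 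This gives
$$C^2\ \le\ (\,A_{\bullet}B_{\bullet}A_{\bullet}B_{\bullet}\cdots\,)^{(1/2)}\ \circ\ (\,B_{\bullet}A_{\bullet}B_{\bullet}A_{\bullet}\cdots\,)^{(1/2)},$$
where each factor on the right is, up to a cyclic shift, a word of $k$ letters from $\Psi\Sigma$.

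Applying (\ref{spectral2}) to this inequality, together with $\rho(C^2)=\rho(C)^2$, the cyclic invariance $\rho(UV)=\rho(VU)$, and $\rho(\Psi\Sigma)=\sup_j[\sup_{D\in(\Psi\Sigma)^j}\rho(D)]^{1/j}$, yields $\rho(C)^2\le\rho(\Psi\Sigma)^k$, hence $\rho(C)^{1/k}\le\rho(\Psi\Sigma)^{1/2}$; taking the supremum over $C$ and over $k$ gives (\ref{Hurho2}). For (\ref{Hurhoh2}) one carries out the same construction on $C\in\bigl(\Psi^{(1/2)}\circ\Sigma^{(1/2)}\bigr)^{2\ell}$, lets $\ell\to\infty$, uses (\ref{norm2}) in place of (\ref{spectral2}), and invokes $\hat\rho(\Psi\Sigma)=\hat\rho(\Sigma\Psi)$. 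The only real subtlety in this second route is the combinatorial bookkeeping that keeps the two column words alternating across block boundaries for both parities of $k$ — but that is precisely the argument already executed in the proof of Theorem~\ref{genHuDP} (\cite[Theorem 3.4]{P17}), so I anticipate no genuine obstacle; the substantive content lies entirely in that earlier theorem, which the Corollary merely specializes.
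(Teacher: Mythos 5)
Your first paragraph is exactly what the paper does: Corollary \ref{genSch} is stated without proof as the immediate special case $m=2$ of Theorem \ref{genHuDP} (take $\Psi_1=\Psi$, $\Psi_2=\Sigma$ in (\ref{Hurho}) and (\ref{Hurhoh})), so your proposal matches the paper's approach. The additional self-contained derivation via (\ref{basic2}), (\ref{spectral2}) and (\ref{norm2}) is sound but redundant here, since it just re-runs the proof of \cite[Theorem 3.4]{P17} in the two-set case.
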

Below we refine Corollary \ref{genSch}  in one of our main results (Theorem \ref{refin}) and in Corollary \ref{additional_powers}, while Theorem \ref{powers} refines Theorem \ref{family}. In these results we also establish that analogue results hold for the generalized and joint essential radii in the case when $L$ and $L^*$ have order continuous norms.  

\section{Results}

The following result is proved in a similar way as Theorems \ref{family} and \ref{genHuDP}  by replacing $\rho(\cdot)$ with $\rho _{ess}(\cdot)$ and $\|\cdot\|$ with $\beta (\cdot)$. To avoid too much repetition of ideas we omit the details of the proof.
\begin{theorem} 
Let $\Psi _1, \ldots \Psi _m$ be bounded sets of positive kernel operators on a Banach function space $L$ and let 
 $\alpha _1, \ldots \alpha _m$ be positive numbers such that \\
$\sum _{i=1} ^m \alpha _i = 1$. If $L$ and $L^*$ have order continuous norms then 
\be
r (\Psi _1 ^{( \alpha _1)} \circ \cdots \circ \Psi _m ^{(\alpha _m)} ) \le 
r(\Psi _1)^{ \alpha _1} \, \cdots r(\Psi _m)^{\alpha _m} 
\label{gsh3}
\ee
and
\be
r \left(\Psi _1 ^{\left( \frac{1}{m} \right)} \circ \cdots \circ \Psi _m  ^{\left( \frac{1}{m} \right)} \right) \le r(\Psi _1 \Psi _2 \cdots \Psi _m) ^{\frac{1}{m}}\;\;\;\;\;\;\;\;
\label{Hu_ess}
\ee
hold for each  $r\in \{ \rho _{ess}, \hat{\rho} _{ess}\}$.
\label{ess}
\end{theorem}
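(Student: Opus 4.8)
The plan is to transcribe the proofs of Theorems~\ref{family} and \ref{genHuDP} almost verbatim, replacing $\rho(\cdot)$ throughout by $\rho_{ess}(\cdot)$ and the operator norm $\|\cdot\|$ by the Hausdorff measure of noncompactness $\beta(\cdot)$, and using the conditional inequalities (\ref{ess_spectral}) and (\ref{meas_noncomp}) of Theorem~\ref{DPBfs} in place of (\ref{spectral2}) and (\ref{norm2}). The hypothesis that $L$ and $L^*$ have order continuous norms enters exactly here: it is what makes (\ref{ess_spectral}) and (\ref{meas_noncomp}) available, and what guarantees the properties of $\beta$ recalled in Section~2 (submultiplicativity, $\beta(A)\le\|A\|$, and monotonicity under $0\le A\le B$) and, via (\ref{esslim=inf}), the identities $\rho_{ess}(A^k)=\rho_{ess}(A)^k$ as well as the monotonicity $0\le A\le B\Rightarrow\rho_{ess}(A)\le\rho_{ess}(B)$. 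The only additional property of $\rho$ used in those proofs is cyclic invariance $\rho(AB)=\rho(BA)$; for $\rho_{ess}$ this survives, since the spectral radius is cyclic invariant in the Calkin algebra, whereas for $\beta$ (as already for $\|\cdot\|$) it is available only at the level of sets, through $r(\Psi\Sigma)=r(\Sigma\Psi)$, and must be supplemented by a boundedness estimate; this is the one point requiring a new idea.

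For the first inequality (\ref{gsh3}) I would fix $N\in\NN$ and an element $E=D_1\cdots D_N$ of $(\Psi_1^{(\alpha_1)}\circ\cdots\circ\Psi_m^{(\alpha_m)})^N$, where $D_i=A_{i1}^{(\alpha_1)}\circ\cdots\circ A_{im}^{(\alpha_m)}$ with $A_{ij}\in\Psi_j$, and apply (\ref{ess_spectral}) (respectively (\ref{meas_noncomp})) with $k=N$ to get $r(E)\le\prod_{j=1}^m r(A_{1j}A_{2j}\cdots A_{Nj})^{\alpha_j}$ for $r\in\{\rho_{ess},\beta\}$. Since $A_{1j}A_{2j}\cdots A_{Nj}\in\Psi_j^{N}$, this is at most $\prod_{j=1}^m\bigl(\sup_{R\in\Psi_j^{N}}r(R)\bigr)^{\alpha_j}$. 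Taking the supremum over such $E$, extracting $N$-th roots, and then applying $\sup_{N\in\NN}$ (for $r=\rho_{ess}$, by (\ref{genrhoess})) or $\lim_{N\to\infty}$ (for $r=\hat\rho_{ess}$, by (\ref{jointess}), the limit existing by submultiplicativity) yields (\ref{gsh3}), just as Theorem~\ref{family} follows from (\ref{spectral2}).

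For the Hu-type inequality (\ref{Hu_ess}) I would write $\Sigma=\Psi_1\Psi_2\cdots\Psi_m$ and $\Phi=\Psi_1^{(1/m)}\circ\cdots\circ\Psi_m^{(1/m)}$, take $K$ a multiple of $m$, and consider $F=D_1\cdots D_K\in\Phi^K$ with $D_i=A_{i1}^{(1/m)}\circ\cdots\circ A_{im}^{(1/m)}$, $A_{ij}\in\Psi_j$. For each $c\in\{1,\dots,m\}$, using commutativity of the Hadamard product I rewrite $D_i$ so that its $c$-th Hadamard factor is $A_{i,j_c(i)}^{(1/m)}$, where $j_c(i)=((i+c-2)\bmod m)+1$, set $C_c:=A_{1,j_c(1)}A_{2,j_c(2)}\cdots A_{K,j_c(K)}$, and apply (\ref{ess_spectral}) (respectively (\ref{meas_noncomp})) with $k=K$ to get $r(F)\le\prod_{c=1}^m r(C_c)^{1/m}$. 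Along the $c$-th column the second indices $j_c(i)$ run periodically through $1,2,\dots,m$, so $C_c$ splits into consecutive length-$m$ blocks, each belonging to $\Psi_1\Psi_2\cdots\Psi_m=\Sigma$, except that for $c\ne1$ a shorter piece is left over at each end: a head lying in $\Psi_1\cdots\Psi_{c-1}$ and a tail lying in $\Psi_c\cdots\Psi_m$, whose product (tail followed by head) again lies in $\Sigma$. Hence $C_c$ is, after at most one cyclic rotation, an element of $\Sigma^{K/m}$. For $r=\rho_{ess}$, cyclic invariance then gives $\rho_{ess}(C_c)\le\sup_{R\in\Sigma^{K/m}}\rho_{ess}(R)\le\rho_{ess}(\Sigma)^{K/m}$, so $\rho_{ess}(F)\le\rho_{ess}(\Sigma)^{K/m}$; taking $\sup_{F\in\Phi^K}$ and then $\sup_K$ over multiples of $m$ (which suffices, since $\rho_{ess}(E^k)=\rho_{ess}(E)^k$) gives $\rho_{ess}(\Phi)\le\rho_{ess}(\Sigma)^{1/m}$.

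The step I expect to be the main obstacle is the $\hat\rho_{ess}$ case of (\ref{Hu_ess}): since $\beta$ is not cyclic invariant on individual operators, the cyclic rotation above cannot be carried out for free. Here I would take $K=mN$ and let $N\to\infty$; by the combinatorics just described, $C_c=P_c\,G_1\cdots G_{N-1}\,Q_c$ with each $G_t\in\Sigma$ and with $P_c,Q_c$ each a product of at most $m-1$ operators drawn from the sets $\Psi_j$, their lengths summing to $m$, and satisfying $Q_cP_c\in\Sigma$, while $C_1\in\Sigma^N$. Using $\beta(AB)\le\beta(A)\beta(B)$, $\beta\le\|\cdot\|$, and $M:=\max_{1\le j\le m}\sup_{A\in\Psi_j}\|A\|<\infty$, one bounds $\beta(C_c)\le M^{m}\sup_{R\in\Sigma^{N-1}}\beta(R)$ for every $c$ (for $c=1$ absorbing the comparable factor $\sup_{R\in\Sigma^{N}}\beta(R)\le M^{m}\sup_{R\in\Sigma^{N-1}}\beta(R)$). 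By (\ref{meas_noncomp}), $\beta(F)\le\prod_{c=1}^m\beta(C_c)^{1/m}\le M^{m}\sup_{R\in\Sigma^{N-1}}\beta(R)$, hence $\sup_{F\in\Phi^{mN}}\beta(F)\le M^{m}\sup_{R\in\Sigma^{N-1}}\beta(R)$; raising to the power $1/(mN)$ and letting $N\to\infty$, the constant $M^{m}$ disappears while $\bigl(\sup_{R\in\Sigma^{N-1}}\beta(R)\bigr)^{1/(mN)}\to\hat\rho_{ess}(\Sigma)^{1/m}$ by (\ref{jointess}). Since $\hat\rho_{ess}(\Phi)$ may be computed as $\lim_{N\to\infty}\bigl(\sup_{F\in\Phi^{mN}}\beta(F)\bigr)^{1/(mN)}$, this gives $\hat\rho_{ess}(\Phi)\le\hat\rho_{ess}(\Sigma)^{1/m}=\hat\rho_{ess}(\Psi_1\Psi_2\cdots\Psi_m)^{1/m}$. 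All remaining details are a routine transcription of the $\rho$- and $\hat\rho$-arguments and would be omitted.
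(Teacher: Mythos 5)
Your proposal is correct and is precisely the argument the paper intends: the paper omits the details, saying only that the result follows from the proofs of Theorems \ref{family} and \ref{genHuDP} upon replacing $\rho(\cdot)$ by $\rho_{ess}(\cdot)$ and $\|\cdot\|$ by $\beta(\cdot)$ (i.e., invoking (\ref{ess_spectral}) and (\ref{meas_noncomp}) in place of (\ref{spectral2}) and (\ref{norm2})), and your transcription --- including the uniform bound $M^{m}$ that compensates for the failure of cyclic invariance of $\beta$ on individual operators --- supplies exactly those details. The only inconsequential slip is that in your decomposition of $C_c$ the head lies in $\Psi_c\cdots\Psi_m$ and the tail in $\Psi_1\cdots\Psi_{c-1}$, not vice versa; this does not affect the conclusion that the two end pieces, multiplied in the appropriate order, give an element of $\Sigma$, so that $C_c$ is a cyclic rotation of an element of $\Sigma^{K/m}$.
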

As already pointed out in \cite[Remark 3.4]{P17+}, the inequalities (\ref{genHuBfsP}) can be deduced from the proof of Theorem  \ref{genHuDP}. In a similar way the following result follows from the proof of (\ref{Hu_ess}). 
\begin{theorem}
Given a Banach function space $L$ such that $L$ and  $L^*$ have order continuous norms, let $A_1, A_2, \ldots , A_m$ be  positive kernel operators on $L$. If \\
$P_j = A_j \ldots A_m A_1 \ldots A_{j-1}$ for $j=1,\ldots , m$, then we have
% the inequalities
$$\rho _{ess} \left(A_1^{\left(\frac{1}{m}\right)} \circ A_2^{\left(\frac{1}{m}\right)} \circ \cdots \circ 
A_m^{\left(\frac{1}{m}\right)}\right) \;\;\;\;\;\;\;\;\;\;\;\;\;\;\;\;\;\;\;\;\;\;\;\;\;\;\;\;\;\;\;\;\;\;$$
\be
\le \rho _{ess} \left(P_1^{\left(\frac{1}{m}\right)} \circ P_2^{\left(\frac{1}{m}\right)} \circ \cdots \circ 
P_m^{\left(\frac{1}{m}\right)}\right) ^{\frac{1}{m}}  \le \rho _{ess} (A_1 A_2 \cdots A_m) ^{\frac{1}{m}} .  
\label{genHuBfsP_ess}
\ee
%hold, where  $P_j = A_j \ldots A_m A_1 \ldots A_{j-1}$ for $j=1,\ldots , m$. 
\label{essPji}
\end{theorem}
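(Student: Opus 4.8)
The plan is to carry out, at the level of a single $m$-tuple of operators, the argument that proves the refined Hadamard--Huang inequality (\ref{genHuBfsP}), replacing $\rho(\cdot)$ by $\rho _{ess}(\cdot)$ and $\|\cdot\|$ by $\beta(\cdot)$ throughout (this is exactly how Theorem \ref{ess} was obtained from Theorems \ref{family} and \ref{genHuDP}). Write $B = A_1^{(1/m)} \circ \cdots \circ A_m^{(1/m)}$ and $Q = P_1^{(1/m)} \circ \cdots \circ P_m^{(1/m)}$; both are positive kernel operators defined on all of $L$, being Hadamard weighted geometric means. The whole proof rests on a single operator inequality, namely $B^m \le Q$, after which the two claimed estimates follow from soft arguments.

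First I would prove $B^m \le Q$. Because the Hadamard product is commutative, each of the $m$ identical factors of the operator power $B^m = B \cdot B \cdots B$ can be rewritten as a cyclic rotation of the Hadamard mean: for $i = 1, \ldots, m$,
\[ B = A_i^{(1/m)} \circ A_{i+1}^{(1/m)} \circ \cdots \circ A_{i+m-1}^{(1/m)}, \]
indices being read cyclically in $\{1, \ldots, m\}$. Multiplying these $m$ representations and applying inequality (\ref{basic2}) of Theorem \ref{DPBfs} with $k = m$, weights $\alpha_1 = \cdots = \alpha_m = 1/m$, and $(i,l)$-entry $A_{i+l-1}$ (cyclic indices), the $l$-th Hadamard column collapses to the operator product $A_l A_{l+1} \cdots A_m A_1 \cdots A_{l-1} = P_l$; hence $B^m \le P_1^{(1/m)} \circ \cdots \circ P_m^{(1/m)} = Q$ on all of $L$. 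This is precisely the device of \cite{P17+} and of the proof of Theorem \ref{genHuDP}, now performed for one operator tuple rather than for a bounded set.

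Next I would deduce (\ref{genHuBfsP_ess}). For positive operators $0 \le S \le T$ one has $0 \le S^j \le T^j$ for all $j$, so $\beta(S^j) \le \beta(T^j)$, and (\ref{esslim=inf}) gives $\rho _{ess}(S) \le \rho _{ess}(T)$; taking $S = B^m$, $T = Q$ and using $\rho _{ess}(B^m) = \rho _{ess}(B)^m$ (again immediate from (\ref{esslim=inf})) yields $\rho _{ess}(B)^m \le \rho _{ess}(Q)$, i.e. the first inequality of (\ref{genHuBfsP_ess}). For the second inequality I would apply the $\rho _{ess}$-part of Theorem \ref{special_case} to $Q$, obtaining $\rho _{ess}(Q) \le \prod_{j=1}^m \rho _{ess}(P_j)^{1/m}$; since each $P_j$ is a cyclic permutation of $A_1 A_2 \cdots A_m$ and $\rho _{ess}(CD) = \rho _{ess}(DC)$ (the single-operator case of $r(\Psi \Sigma) = r(\Sigma \Psi)$), every factor equals $\rho _{ess}(A_1 A_2 \cdots A_m)$, whence $\rho _{ess}(Q) \le \rho _{ess}(A_1 \cdots A_m)$, i.e. $\rho _{ess}(Q)^{1/m} \le \rho _{ess}(A_1 \cdots A_m)^{1/m}$. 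Chaining the two gives (\ref{genHuBfsP_ess}). The hypothesis that $L$ and $L^*$ have order continuous norms enters only through the properties of $\beta$ and through the $\rho _{ess}$-statements of Theorems \ref{DPBfs} and \ref{special_case}; Step~1 needs no such assumption.

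The one place that needs care is Step~1: one must be sure that Theorem \ref{DPBfs} literally applies, i.e. that $B^m \le Q$ is a genuine operator inequality on all of $L$ rather than a mere pointwise inequality of kernels on some order ideal --- this is fine because a Hadamard weighted geometric mean is always a positive kernel operator on the whole space --- and that pointwise commutativity of the kernels really does make each cyclic rotation of the Hadamard mean the same operator $B$. Everything else is routine bookkeeping. As an alternative to this one-operator derivation one can rerun the bounded-set proof behind (\ref{Hu_ess}) verbatim, with $\rho$ replaced by $\rho _{ess}$ and $\|\cdot\|$ by $\beta(\cdot)$; that is the route the statement alludes to, and it reaches the same conclusion.
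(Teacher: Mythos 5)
Your proposal is correct and follows exactly the route the paper indicates (the paper omits the details, saying only that the result ``follows from the proof of (\ref{Hu_ess})'' in the manner of \cite[Remark 3.4]{P17+}): the cyclic-rotation trick combined with (\ref{basic2}) to get $B^m \le Q$, then monotonicity of $\beta$ together with (\ref{esslim=inf}), and finally the $\rho_{ess}$-part of Theorem \ref{special_case} with $\rho_{ess}(CD)=\rho_{ess}(DC)$. In effect you have supplied, correctly, the argument the paper leaves to the reader.
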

\begin{corollary}
Given a Banach function space $L$ such that $L$ and  $L^*$ have order continuous norms, let $A$ and $B$ be  positive kernel operators on $L$.
Then
\be
\rho _{ess} \left(A^{(\frac{1}{2})} \circ B^{(\frac{1}{2})} \right) \le  \rho _{ess} \left((AB)^{(\frac{1}{2})} \circ (BA)^{(\frac{1}{2})}\right)^{\frac{1}{2}} \le \rho _{ess} (AB) ^{\frac{1}{2}}.
\label{sch_ess_ref}
\ee
\end{corollary}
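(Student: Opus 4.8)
The plan is to derive this corollary as the special case $m=2$ of Theorem \ref{essPji}. First I would set $A_1 = A$ and $A_2 = B$ in Theorem \ref{essPji}. With the convention that $A_{j-1} = I$ for $j=1$, the operators $P_j = A_j \cdots A_m A_1 \cdots A_{j-1}$ become $P_1 = A_1 A_2 = AB$ and $P_2 = A_2 A_1 = BA$. Substituting these into inequality (\ref{genHuBfsP_ess}) immediately yields
\[
\rho _{ess}\left(A^{(\frac{1}{2})} \circ B^{(\frac{1}{2})}\right) \le \rho _{ess}\left((AB)^{(\frac{1}{2})} \circ (BA)^{(\frac{1}{2})}\right)^{\frac{1}{2}} \le \rho _{ess}(A_1 A_2)^{\frac{1}{2}} = \rho _{ess}(AB)^{\frac{1}{2}},
\]
which is exactly (\ref{sch_ess_ref}). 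Since Theorem \ref{essPji} requires precisely that $L$ and $L^*$ have order continuous norms, the hypotheses match and nothing extra is needed.

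There is essentially no obstacle here: the corollary is a direct specialization, and the only thing to check is the bookkeeping of the cyclic products $P_1, P_2$, which is routine. If one instead wanted a self-contained argument not invoking Theorem \ref{essPji}, the natural route would be to mimic the proof of Theorem \ref{genHuDP} (or of (\ref{Hu_ess}) in Theorem \ref{ess}) at the level $m=2$: apply the essential-radius version of Theorem \ref{DPBfs}, i.e.\ inequality (\ref{ess_spectral}), with a suitable $2\times m$ array of the operators $A$, $B$, $AB$, $BA$, together with the identities $\rho_{ess}(\Sigma^m) = \rho_{ess}(\Sigma)^m$ and $\rho_{ess}(\Psi\Sigma) = \rho_{ess}(\Sigma\Psi)$ recalled in Section 2, and the submultiplicativity and monotonicity properties of $\beta(\cdot)$. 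The key inequality $\beta(A\circ B) \le \beta(A_{11}\cdots)^{\alpha_1}\cdots$ from Theorem \ref{DPBfs} is what makes the essential-radius analogue of Schep's argument go through, and the passage from the operator/$\beta$ estimate to the essential spectral radius uses (\ref{esslim=inf}).

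In the write-up I would keep it to one or two sentences, simply stating that (\ref{sch_ess_ref}) is the case $m=2$ of Theorem \ref{essPji} with $A_1=A$, $A_2=B$, $P_1 = AB$, $P_2 = BA$.
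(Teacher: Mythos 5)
Your proposal is correct and matches the paper's intended derivation: the corollary is stated immediately after Theorem \ref{essPji} precisely as its $m=2$ specialization, with $P_1=AB$ and $P_2=BA$, which is exactly your argument. Nothing further is needed.
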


%In \cite[Theorem 3.4]{P17}, the author generalized the inequality (\ref{genHuBfs}) to the setting of the generalized and the joint spectral radius of bounded sets of positive kernel operators on a Banach function space (see also Theorem \ref{genHuDP} below). As already pointed out in \cite[Remark 3.4]{P17+}, the inequalities (\ref{genHuBfsP}) can also be deduced from the proof of \cite[Theorem 3.4]{P17}.

%It follows from the proof of  \cite[Theorem 3.4]{P17} that the following inequality ... hold for ... See also ...

%Similarly also the following result for  $\rho _{ess}(\cdot)$ follows.
%\begin{theorem} Let
%\end{theorem}
Inequalities (\ref{sch_ess_ref}) are generalized and refined in Theorem \ref{refin}.
The following lemma follows from (\ref{basic2}) and commutativity of Hadamard product (or directly from Cauchy-Schwartz inequality).
\begin{lemma} Let $A,B,C,D$ be 
positive kernel operators on a Banach function space $L$.
Then we have
\be
(A^{(\frac{1}{2})} \circ B^{(\frac{1}{2})})(C^{(\frac{1}{2})} \circ D^{(\frac{1}{2})}) \le (AC)^{(\frac{1}{2})}  \circ (BD)^{(\frac{1}{2})},
\label{CS}
\ee
\be
(A^{(\frac{1}{2})} \circ B^{(\frac{1}{2})})(C^{(\frac{1}{2})} \circ D^{(\frac{1}{2})}) \le (AD)^{(\frac{1}{2})}  \circ (BC)^{(\frac{1}{2})}.
\label{CScomm}
\ee
\end{lemma}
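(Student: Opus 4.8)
The plan is to derive both inequalities directly from the basic submultiplicativity property of the Hadamard product encoded in \eqref{basic2}, together with the commutativity of the Hadamard product. Recall that for positive kernel operators, \eqref{basic2} applied to a product of two factors states that
$$
\left(A^{(1/2)} \circ B^{(1/2)}\right)\left(C^{(1/2)} \circ D^{(1/2)}\right)
\;\le\;
\left(AC\right)^{(1/2)} \circ \left(BD\right)^{(1/2)} .
$$
This already gives \eqref{CS} immediately, with $k=2$, $m=2$, $\alpha_1=\alpha_2=1/2$, $A_{11}=A$, $A_{21}=C$, $A_{12}=B$, $A_{22}=D$. So the first inequality requires essentially no work beyond quoting \eqref{basic2}.

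The second inequality \eqref{CScomm} I would obtain from the first by exploiting commutativity of the Hadamard product. Since $C^{(1/2)} \circ D^{(1/2)} = D^{(1/2)} \circ C^{(1/2)}$, the left-hand side of \eqref{CScomm} equals $\left(A^{(1/2)} \circ B^{(1/2)}\right)\left(D^{(1/2)} \circ C^{(1/2)}\right)$; now apply \eqref{CS} with the roles of $C$ and $D$ interchanged (i.e.\ with $D$ in the third slot and $C$ in the fourth slot) to get the bound $\left(AD\right)^{(1/2)} \circ \left(BC\right)^{(1/2)}$. That is precisely the claimed inequality \eqref{CScomm}. Alternatively, as the statement itself notes, one can argue pointwise: at a fixed $(x,y) \in X\times X$ the kernel of the left-hand side of \eqref{CScomm} is $\int_X \sqrt{a(x,z)b(x,z)}\,\sqrt{c(z,y)d(z,y)}\,d\mu(z)$, and regrouping the integrand as $\sqrt{a(x,z)d(z,y)}\cdot\sqrt{b(x,z)c(z,y)}$ and applying the Cauchy--Schwarz inequality in $L^2(\mu)$ yields the upper bound $\sqrt{\int a(x,z)d(z,y)\,d\mu(z)}\cdot\sqrt{\int b(x,z)c(z,y)\,d\mu(z)}$, which is the kernel of $(AD)^{(1/2)} \circ (BC)^{(1/2)}$ at $(x,y)$. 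One must only remark that all integrals involved are finite (and the manipulations legitimate) because the Hadamard geometric means in question are dominated by arithmetic means of the operators $A,B,C,D$, which are genuine bounded operators on $L$, so everything makes sense on the appropriate order ideal of $L$; this is the same observation already used in the definition of the Hadamard weighted geometric mean in Section~2.

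There is essentially no serious obstacle here: the lemma is a packaging of Cauchy--Schwarz, and the only mild technical point is the domain-of-definition issue for the Hadamard products, which is handled exactly as in the preliminaries and which I would dispose of in a single sentence. The reason the lemma is stated separately is presumably that it will be invoked repeatedly — once with $(C,D)=(A,B)$ or $(C,D)$ a pair of words in the alphabet $\Psi,\Sigma$, etc.\ — in the proof of Theorem~\ref{refin}; so the proof I would write is deliberately short, just citing \eqref{basic2} for \eqref{CS} and then the commutativity swap for \eqref{CScomm}, with the Cauchy--Schwarz remark offered parenthetically as an alternative self-contained route.
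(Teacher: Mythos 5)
Your proposal is correct and matches the paper exactly: the paper gives no separate proof but states that the lemma ``follows from (\ref{basic2}) and commutativity of Hadamard product (or directly from Cauchy--Schwartz inequality)'', which is precisely your argument (specializing (\ref{basic2}) to $k=m=2$, $\alpha_1=\alpha_2=\tfrac{1}{2}$ for (\ref{CS}), then swapping $C$ and $D$ via commutativity of $\circ$ for (\ref{CScomm})). Your parenthetical Cauchy--Schwarz route and the remark on domains of definition are likewise consistent with the paper's intent.
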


The following result is a refinement of Corollary \ref{genSch} and it establishes  an analogue for the generalized essential and joint essential spectral radii. In the case of $r\in \{\rho , \hat{\rho}\}$ it is a kernel version of \cite[Theorems 3.5 and 3.7, Remark 3.10]{P12}.

\begin{theorem} Let $\Psi $ and $\Sigma$  be bounded sets of positive kernel operators on a Banach function space $L$.
If $r \in \{\rho, \hat{\rho}\}$ and $\alpha \in [0,1]$  then we have
$$r \left(\Psi ^{\left( \frac{1}{2} \right)} \circ \Sigma  ^{\left( \frac{1}{2} \right)} \right) \le  r \left((\Psi \Sigma)^{\left(\frac{1}{2}\right)} \circ (\Sigma \Psi)^{\left(\frac{1}{2}\right)}\right) ^{\frac{1}{2}}  $$
\be
%r \left(\Psi ^{\left( \frac{1}{2} \right)} \circ \Sigma  ^{\left( \frac{1}{2} \right)} \right) \le  r \left((\Psi \Sigma)^{\left(\frac{1}{2}\right)} \circ (\Sigma \Psi)^{\left(\frac{1}{2}\right)}\right) ^{\frac{1}{2}}  
\le   r \left((\Psi \Sigma)^{\left(\frac{1}{2}\right)} \circ (\Psi \Sigma)^{\left(\frac{1}{2}\right)}\right) ^{\frac{1}{4}}   r \left((\Sigma\Psi)^{\left(\frac{1}{2}\right)} \circ ( \Sigma \Psi)^{\left(\frac{1}{2}\right)}\right) ^{\frac{1}{4}} \le  r (\Psi \Sigma) ^{\frac{1}{2}},
\label{HZPep}
%\label{Hurho2}
\ee
$$r \left(\Psi ^{\left( \frac{1}{2} \right)} \circ \Sigma  ^{\left( \frac{1}{2} \right)} \right) \le r \left( \left (\Psi ^{\left( \frac{1}{2} \right)} \circ \Psi  ^{\left( \frac{1}{2} \right)}\right) \left (\Sigma^{\left( \frac{1}{2} \right)} \circ \Sigma  ^{\left( \frac{1}{2} \right)}\right) \right)  ^{\frac{1}{2} }   $$
\be
\le   r \left((\Psi \Sigma)^{\left(\frac{1}{2}\right)} \circ (\Psi \Sigma)^{\left(\frac{1}{2}\right)}\right) ^{\frac{\alpha}{2}}   r \left((\Sigma\Psi)^{\left(\frac{1}{2}\right)} \circ ( \Sigma \Psi)^{\left(\frac{1}{2}\right)}\right) ^{\frac{1-\alpha}{2}} \le  r (\Psi \Sigma) ^{\frac{1}{2}}.
\label{AudSchPep}
\ee

If, in addition, $L$ and $L^*$ have order continuous norms, then (\ref{HZPep}) and (\ref{AudSchPep}) hold also for each $r\in \{ \rho _{ess}, \hat{\rho} _{ess}\}$.
%and
%\be
%\hat{\rho}  \left(\Psi ^{\left( \frac{1}{2} \right)} \circ \Sigma  ^{\left( \frac{1}{2} \right)} \right) \le \hat{\rho} (\Psi \Sigma) ^{\frac{1}{2}}
%\label{Hurhoh2}
%\ee
\label{refin}
\end{theorem}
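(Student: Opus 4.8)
The plan is to reduce the statement to three ingredients: an elementary set monotonicity principle for the four radii, the Cauchy--Schwarz type inequalities (\ref{CS}) and (\ref{CScomm}), and the multiplicative estimates of Theorems \ref{family} and \ref{ess}. Abbreviate $\mathcal{C}=\Psi^{(1/2)}\circ\Sigma^{(1/2)}$, $\mathcal{D}=(\Psi\Sigma)^{(1/2)}\circ(\Sigma\Psi)^{(1/2)}$, $\mathcal{E}_1=(\Psi\Sigma)^{(1/2)}\circ(\Psi\Sigma)^{(1/2)}$, $\mathcal{E}_2=(\Sigma\Psi)^{(1/2)}\circ(\Sigma\Psi)^{(1/2)}$, $\mathcal{F}_1=\Psi^{(1/2)}\circ\Psi^{(1/2)}$ and $\mathcal{F}_2=\Sigma^{(1/2)}\circ\Sigma^{(1/2)}$, so that the second term of (\ref{AudSchPep}) is $r(\mathcal{F}_1\mathcal{F}_2)^{1/2}$. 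Throughout, $r$ denotes any one of $\rho$, $\hat{\rho}$, and, once $L$ and $L^*$ are assumed to have order continuous norms, also $\rho_{ess}$ or $\hat{\rho}_{ess}$. The principle I would isolate first is: if $\mathcal{A}$ and $\mathcal{B}$ are bounded sets of positive kernel operators on $L$ such that each $A\in\mathcal{A}$ satisfies $0\le A\le B$ for some $B\in\mathcal{B}$, then $r(\mathcal{A})\le r(\mathcal{B})$.

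To prove the principle: given $W=A_1\cdots A_k\in\mathcal{A}^k$, pick $B_i\in\mathcal{B}$ with $A_i\le B_i$; positivity yields $0\le W\le B_1\cdots B_k\in\mathcal{B}^k$, and since $\rho(\cdot)$, $\|\cdot\|$, $\beta(\cdot)$ and $\rho_{ess}(\cdot)$ are monotone on positive kernel operators (for $\beta$ this is precisely where order continuity of $L$, $L^*$ enters; for $\rho_{ess}$ one combines $0\le W^j\le(B_1\cdots B_k)^j$ with (\ref{esslim=inf})), the supremum over $\mathcal{A}^k$ of the relevant scalar is dominated by that over $\mathcal{B}^k$; taking $k$-th roots and passing to $\sup_k$ (respectively $\lim_k$) finishes it. I would then extract two facts. (a) By (\ref{CScomm}), for $A_1,A_2\in\Psi$ and $B_1,B_2\in\Sigma$,
\[
(A_1^{(1/2)}\circ B_1^{(1/2)})(A_2^{(1/2)}\circ B_2^{(1/2)})\ \le\ (A_1B_2)^{(1/2)}\circ(B_1A_2)^{(1/2)}\ \in\ \mathcal{D},
\]
so every element of $\mathcal{C}^2$ is dominated by one of $\mathcal{D}$, whence $r(\mathcal{C})^2=r(\mathcal{C}^2)\le r(\mathcal{D})$. (b) Since $P=P^{(1/2)}\circ P^{(1/2)}$ for every positive kernel operator $P$, there are inclusions $\Psi\Sigma\subseteq\mathcal{E}_1$, $\Sigma\Psi\subseteq\mathcal{E}_2$ and $\Psi\Sigma\subseteq\mathcal{F}_1\mathcal{F}_2$ (the last because $AB=(A^{(1/2)}\circ A^{(1/2)})(B^{(1/2)}\circ B^{(1/2)})$), hence $r(\Psi\Sigma)\le r(\mathcal{E}_1)$, $r(\Sigma\Psi)\le r(\mathcal{E}_2)$ and $r(\Psi\Sigma)\le r(\mathcal{F}_1\mathcal{F}_2)$; conversely, Theorems \ref{family}/\ref{ess} (two sets, weights $\tfrac12,\tfrac12$) give $r(\mathcal{E}_1)\le r(\Psi\Sigma)$ and $r(\mathcal{E}_2)\le r(\Sigma\Psi)$, while (\ref{CS}) shows every element of $\mathcal{F}_1\mathcal{F}_2$ is dominated by one of $\mathcal{E}_1$, so $r(\mathcal{F}_1\mathcal{F}_2)\le r(\mathcal{E}_1)$. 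Using $r(\Sigma\Psi)=r(\Psi\Sigma)$ we conclude $r(\mathcal{E}_1)=r(\mathcal{E}_2)=r(\mathcal{F}_1\mathcal{F}_2)=r(\Psi\Sigma)$.

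The chains now follow mechanically. By (b), the last two terms of (\ref{HZPep}) and the last three terms of (\ref{AudSchPep}) all equal $r(\Psi\Sigma)^{1/2}$, for every $\alpha\in[0,1]$. By Theorems \ref{family}/\ref{ess}, $r(\mathcal{D})\le r(\Psi\Sigma)^{1/2}r(\Sigma\Psi)^{1/2}=r(\Psi\Sigma)$. Hence (\ref{HZPep}) reads
\[
r(\mathcal{C})\ \le\ r(\mathcal{D})^{1/2}\ \le\ r(\Psi\Sigma)^{1/2}\ =\ r(\mathcal{E}_1)^{1/4}r(\mathcal{E}_2)^{1/4}\ =\ r(\Psi\Sigma)^{1/2},
\]
where the first inequality is (a) and the second is the bound on $r(\mathcal{D})$; and (\ref{AudSchPep}) reads
\[
r(\mathcal{C})\ \le\ r(\mathcal{F}_1\mathcal{F}_2)^{1/2}\ =\ r(\mathcal{E}_1)^{\alpha/2}r(\mathcal{E}_2)^{(1-\alpha)/2}\ =\ r(\Psi\Sigma)^{1/2},
\]
whose sole non-trivial inequality $r(\mathcal{C})\le r(\Psi\Sigma)^{1/2}$ follows by combining (a) with $r(\mathcal{D})\le r(\Psi\Sigma)$. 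For $r\in\{\rho_{ess},\hat{\rho}_{ess}\}$ the same argument applies word for word, once $L$ and $L^*$ have order continuous norms, because then the monotonicity principle and Theorem \ref{ess} are both available.

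I do not expect a serious obstacle; the delicate points are organizational. One is the bookkeeping in (b): applying the right member of (\ref{CS}), (\ref{CScomm}) to the right product so that the dominating Hadamard geometric mean genuinely lands in $\mathcal{D}$ (respectively $\mathcal{E}_1$), together with the cyclic identity $r(\Psi\Sigma)=r(\Sigma\Psi)$. The other is the trivial-but-decisive identity $P=P^{(1/2)}\circ P^{(1/2)}$, which forces the auxiliary sets $\mathcal{E}_1,\mathcal{E}_2,\mathcal{F}_1\mathcal{F}_2$ down to radius $r(\Psi\Sigma)$ and thereby makes the middle members of both chains collapse. Finally, one must track the hypothesis on $L$ and $L^*$: it is needed (only) to make the monotonicity of $\beta$ and of $\rho_{ess}$, hence the principle and Theorem \ref{ess}, available in the essential case.
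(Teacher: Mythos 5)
Your proposal is correct, and it rests on the same two domination steps as the paper's proof: inequality (\ref{CScomm}) to dominate each element of $\left(\Psi^{\left(\frac{1}{2}\right)}\circ\Sigma^{\left(\frac{1}{2}\right)}\right)^{2}$ by an element of $(\Psi\Sigma)^{\left(\frac{1}{2}\right)}\circ(\Sigma\Psi)^{\left(\frac{1}{2}\right)}$, and inequality (\ref{CS}) together with the identity $P=P^{\left(\frac{1}{2}\right)}\circ P^{\left(\frac{1}{2}\right)}$, combined with the monotonicity of $\rho$, $\|\cdot\|$, $\beta$ and $\rho_{ess}$ under element-wise domination of positive operators (the paper uses exactly this principle implicitly, with the same caveat that order continuity of the norms of $L$ and $L^*$ is what makes it available for $\beta$ and $\rho_{ess}$). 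Where you genuinely diverge is in the middle links of the two chains. The paper proves each of them locally: the second inequality of (\ref{HZPep}) by applying (\ref{spectral2}) and (\ref{norm2}) to products of $m$ elements of $(\Psi\Sigma)^{\left(\frac{1}{2}\right)}\circ(\Sigma\Psi)^{\left(\frac{1}{2}\right)}$ and then invoking the inclusion $(\Psi\Sigma)^{m}\subseteq\left((\Psi\Sigma)^{\left(\frac{1}{2}\right)}\circ(\Psi\Sigma)^{\left(\frac{1}{2}\right)}\right)^{m}$, and the first inequality of (\ref{AudSchPep}) by a rearrangement trick (commuting the Hadamard factors in the even positions before applying (\ref{spectral2}) and (\ref{norm2})). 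You instead prove the stronger statement that $r\left((\Psi\Sigma)^{\left(\frac{1}{2}\right)}\circ(\Psi\Sigma)^{\left(\frac{1}{2}\right)}\right)$, $r\left((\Sigma\Psi)^{\left(\frac{1}{2}\right)}\circ(\Sigma\Psi)^{\left(\frac{1}{2}\right)}\right)$ and $r\left(\left(\Psi^{\left(\frac{1}{2}\right)}\circ\Psi^{\left(\frac{1}{2}\right)}\right)\left(\Sigma^{\left(\frac{1}{2}\right)}\circ\Sigma^{\left(\frac{1}{2}\right)}\right)\right)$ all equal $r(\Psi\Sigma)$, by squeezing each between the inclusion of $\Psi\Sigma$ (via $P=P^{\left(\frac{1}{2}\right)}\circ P^{\left(\frac{1}{2}\right)}$) and the upper bound from Theorem \ref{family}, respectively Theorem \ref{ess}. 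This collapses every term after the second in each chain to $r(\Psi\Sigma)^{\frac{1}{2}}$, lets you dispense with the rearrangement argument entirely, and yields the first inequality of (\ref{AudSchPep}) for free from $r\left(\Psi^{\left(\frac{1}{2}\right)}\circ\Sigma^{\left(\frac{1}{2}\right)}\right)^{2}\le r\left((\Psi\Sigma)^{\left(\frac{1}{2}\right)}\circ(\Sigma\Psi)^{\left(\frac{1}{2}\right)}\right)\le r(\Psi\Sigma)$. What your route buys is brevity and the sharper information that, at the level of sets and of the Hadamard geometric mean, the only genuinely new intermediate quantity in Theorem \ref{refin} beyond Corollary \ref{genSch} is $r\left((\Psi\Sigma)^{\left(\frac{1}{2}\right)}\circ(\Sigma\Psi)^{\left(\frac{1}{2}\right)}\right)^{\frac{1}{2}}$; the other displayed refinements are in fact equalities. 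What the paper's local arguments buy is a template that survives when the exponents do not sum so as to make the auxiliary sets collapse (as in the matrix Hadamard-product results it generalizes).
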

\begin{proof} Let  $r \in \{\rho, \hat{\rho}\}$. For the proof of the first inequality in (\ref{HZPep}) take $m \in \NN$ and $A \in  \left(\Psi ^{\left( \frac{1}{2} \right)} \circ \Sigma  ^{\left( \frac{1}{2} \right)} \right)^{2m}$. There exist $A_i \in \Psi$ and $B_i \in \Sigma$ for $i=1, \ldots, m$,  such that
$$A= (A^{(\frac{1}{2})} _1 \circ B^{(\frac{1}{2})}  _1 ) (A^{(\frac{1}{2})} _2 \circ B^{(\frac{1}{2})}  _2 ) \cdots (A^{(\frac{1}{2})} _{2m-1} \circ B^{(\frac{1}{2})}  _{2m-1 })(A^{(\frac{1}{2})} _{2m} \circ B^{(\frac{1}{2})}  _{2m} ).$$
By (\ref{CScomm}) we have $A\le B$ for
\be
B=  (C^{(\frac{1}{2})} _1 \circ D^{(\frac{1}{2})}  _1 ) \cdots (C^{(\frac{1}{2})} _m \circ D^{(\frac{1}{2})}  _m ) \in  \left((\Psi \Sigma)^{\left(\frac{1}{2}\right)} \circ (\Sigma \Psi)^{\left(\frac{1}{2}\right)}\right) ^m,
\label{B}
\ee
where $C_i = A_{2i-1}B_{2i} \in \Psi \Sigma$ and $D_i = B_{2i-1}A_{2i} \in \Sigma\Psi $ for $i=1, \ldots , m$.
Therefore $\rho(A)^{\frac{1}{2m}}\le \rho (B)^{\frac{1}{2m}}$ and $\|A\|^{\frac{1}{2m}}\le \|B\|^{\frac{1}{2m}}$ and so the first inequality in (\ref{HZPep}) follows.

For the proof of the second inequality  in (\ref{HZPep}) observe that by (\ref{spectral2}) and (\ref{norm2})
$$\rho(B) \le \rho(C_1 \cdots C_m)^{\frac{1}{2}} \rho(D_1 \cdots D_m )^{\frac{1}{2}} \; \mathrm{and}\; \|B\| \le \|C_1 \cdots C_m\|^{\frac{1}{2}} \|D_1 \cdots D_m \|^{\frac{1}{2}}$$
for each $ B \in \left((\Psi \Sigma)^{\left(\frac{1}{2}\right)} \circ (\Sigma \Psi)^{\left(\frac{1}{2}\right)}\right) ^m$ and $C_i \in \Psi \Sigma$, $D_i  \in \Sigma\Psi $ for $i=1, \ldots , m$ that satisfy (\ref{B}).
Since $C_1 \cdots C_m \in (\Psi \Sigma)^m \subset  \left((\Psi \Sigma)^{\left(\frac{1}{2}\right)} \circ (\Psi \Sigma)^{\left(\frac{1}{2}\right)}\right)^m $ and $D_1 \cdots D_m \in (\Sigma \Psi)^m \subset  \left((\Sigma\Psi )^{\left(\frac{1}{2}\right)} \circ (\Sigma \Psi)^{\left(\frac{1}{2}\right)}\right)^m $, the second inequality in (\ref{HZPep}) follows. Note that the above two inclusions follow from trivial identities $C_i = C_i ^{(\frac{1}{2})} \circ C_i ^{(\frac{1}{2})}$ and $D_i = D_i ^{(\frac{1}{2})} \circ D_i ^{(\frac{1}{2})}$.
%$B \le ((C_1 \cdots C_m)^{(\frac{1}{2})} \circ (D_1 \cdots D_m )^{(\frac{1}{2})} )$ 

The last inequality in  (\ref{HZPep}) follows from Theorem \ref{family} and the fact that $r(\Psi \Sigma)= r(\Sigma\Psi)$.

For the proof of the first inequality in (\ref{AudSchPep}) take $m \in \NN$ and $A \in  \left(\Psi ^{\left( \frac{1}{2} \right)} \circ \Sigma  ^{\left( \frac{1}{2} \right)} \right)^{2m}$. There exist $A_i \in \Psi$ and $B_i \in \Sigma$ for $i=1, \ldots, m$,  such that
$$A= (A^{(\frac{1}{2})} _1 \circ B^{(\frac{1}{2})}  _1 ) (A^{(\frac{1}{2})} _2 \circ B^{(\frac{1}{2})}  _2 ) \cdots (A^{(\frac{1}{2})} _{2m-1} \circ B^{(\frac{1}{2})}  _{2m-1 })(A^{(\frac{1}{2})} _{2m} \circ B^{(\frac{1}{2})}  _{2m} )$$
$$ \;\; \;\;= (A^{(\frac{1}{2})} _1 \circ B^{(\frac{1}{2})}  _1 ) (B^{(\frac{1}{2})} _2 \circ A^{(\frac{1}{2})}  _2 ) \cdots (A^{(\frac{1}{2})} _{2m-1} \circ B^{(\frac{1}{2})}  _{2m-1 })(B^{(\frac{1}{2})} _{2m} \circ A^{(\frac{1}{2})}  _{2m} ).$$
It follows by  (\ref{spectral2}) and (\ref{norm2}) that
$$\rho (A) \le \rho (A_1B_2A_3B_4 \cdots A_{2m-1}B_{2m})^{\frac{1}{2}} \rho (B_1A_2B_3A_4 \cdots B_{2m-1}A_{2m})^{\frac{1}{2}}.$$
Since $A_1B_2A_3B_4 \cdots A_{2m-1}B_{2m} \in  (\Psi \Sigma)^m \subset  \left( \left( \Psi^{\left(\frac{1}{2}\right)} \circ \Psi^{\left(\frac{1}{2}\right)} \right) \left( \Sigma ^{\left(\frac{1}{2}\right)} \circ \Sigma ^{\left(\frac{1}{2}\right)} \right) \right)^m $
and  $B_1A_2B_3A_4 \cdots B_{2m-1}A_{2m} \in  (\Sigma \Psi)^m \subset  \left( \left( \Sigma ^{\left(\frac{1}{2}\right)} \circ \Sigma ^{\left(\frac{1}{2}\right)} \right) \left( \Psi ^{\left(\frac{1}{2}\right)} \circ \Psi ^{\left(\frac{1}{2}\right)} \right) \right)^m $
this implies 
$$r \left(\Psi ^{\left( \frac{1}{2} \right)} \circ \Sigma  ^{\left( \frac{1}{2} \right)} \right) $$
$$\le r \left( \left (\Psi ^{\left( \frac{1}{2} \right)} \circ \Psi  ^{\left( \frac{1}{2} \right)}\right) \left (\Sigma^{\left( \frac{1}{2} \right)} \circ \Sigma  ^{\left( \frac{1}{2} \right)}\right) \right)  ^{\frac{1}{4} }    r \left( \left (\Sigma ^{\left( \frac{1}{2} \right)} \circ \Sigma  ^{\left( \frac{1}{2} \right)}\right) \left (\Psi^{\left( \frac{1}{2} \right)} \circ \Psi  ^{\left( \frac{1}{2} \right)}\right) \right)  ^{\frac{1}{4} }$$
$$=  r \left( \left (\Psi ^{\left( \frac{1}{2} \right)} \circ \Psi  ^{\left( \frac{1}{2} \right)}\right) \left (\Sigma^{\left( \frac{1}{2} \right)} \circ \Sigma  ^{\left( \frac{1}{2} \right)}\right) \right)  ^{\frac{1}{2} },  $$
which proves the first inequality in  (\ref{AudSchPep}).

To prove the second inequality in (\ref{AudSchPep}) we first prove that
\be
r \left( \left (\Psi ^{\left( \frac{1}{2} \right)} \circ \Psi  ^{\left( \frac{1}{2} \right)}\right) \left (\Sigma^{\left( \frac{1}{2} \right)} \circ \Sigma  ^{\left( \frac{1}{2} \right)}\right) \right)  
\le   r \left((\Psi \Sigma)^{\left(\frac{1}{2}\right)} \circ (\Psi \Sigma)^{\left(\frac{1}{2}\right)}\right)
\label{Sch_mist}
\ee
 Choose $m \in \NN$ and
 $B  \in  \left( \left (\Psi ^{\left( \frac{1}{2} \right)} \circ \Psi  ^{\left( \frac{1}{2} \right)}\right) \left (\Sigma^{\left( \frac{1}{2} \right)} \circ \Sigma  ^{\left( \frac{1}{2} \right)}\right) \right)^{m}$. There exist $A_i \in \Psi$ and $B_i \in \Sigma$ for $i=1, \ldots, 2m$,  such that
$$B= (A^{(\frac{1}{2})} _1 \circ A^{(\frac{1}{2})}  _2 ) (B^{(\frac{1}{2})} _1 \circ B^{(\frac{1}{2})}  _2 ) \cdots (A^{(\frac{1}{2})} _{2m-1} \circ A^{(\frac{1}{2})}  _{2m })(B^{(\frac{1}{2})} _{2m -1} \circ B^{(\frac{1}{2})}  _{2m} ).$$
By (\ref{CS}) 
$$B \le  \left((A_1 B_1)^{(\frac{1}{2})} \circ (A_2B_2)^{(\frac{1}{2})}  \right)  \cdots \left((A_{2m-1}B _{2m-1})^{(\frac{1}{2})}  \circ (A_{2m}B _{2m})^{(\frac{1}{2})}\right) =:C $$
and so $r(B)^{1/m} \le r(C)^{1/m}$ and  $\|B\|^{1/m} \le \|C\|^{1/m}$. Since
$C \in  \left((\Psi \Sigma)^{\left(\frac{1}{2}\right)} \circ (\Psi \Sigma)^{\left(\frac{1}{2}\right)}\right)^m$ this implies (\ref{Sch_mist}).

By interchanging the roles of $\Psi$ and $\Sigma$ in (\ref{Sch_mist}) it follows that
\be
r \left( \left (\Psi ^{\left( \frac{1}{2} \right)} \circ \Psi  ^{\left( \frac{1}{2} \right)}\right) \left (\Sigma^{\left( \frac{1}{2} \right)} \circ \Sigma  ^{\left( \frac{1}{2} \right)}\right) \right)  
\le   r \left((\Sigma\Psi)^{\left(\frac{1}{2}\right)} \circ (\Sigma \Psi)^{\left(\frac{1}{2}\right)}\right).
\label{Pep_from_Sch_mist}
\ee
Now the  the second inequality in (\ref{AudSchPep}) follows from (\ref{Sch_mist}) and  (\ref{Pep_from_Sch_mist}).

The last inequality in (\ref{AudSchPep}) follows from Theorem \ref{family} and the fact that $r(\Psi \Sigma)= r(\Sigma\Psi)$.

If $L$ and $L^*$ have order continuous norms, then by replacing $\rho(\cdot)$ with $\rho _{ess} (\cdot)$ and $\|\cdot\|$ with $\beta(\cdot)$ in the proof above one obtains that  (\ref{HZPep}) and (\ref{AudSchPep}) hold also for  each $r\in \{ \rho _{ess}, \hat{\rho} _{ess}\}$, which completes the proof.
\end{proof}

\begin{remark}{\rm Note that $ r \left((\Psi \Sigma)^{\left(\frac{1}{2}\right)} \circ (\Sigma \Psi)^{\left(\frac{1}{2}\right)}\right)$ and $ r \left( \left (\Psi ^{\left( \frac{1}{2} \right)} \circ \Psi  ^{\left( \frac{1}{2} \right)}\right) \left (\Sigma^{\left( \frac{1}{2} \right)} \circ \Sigma  ^{\left( \frac{1}{2} \right)}\right) \right) $ are in general  not comparable, and similarly for 
$ r \left((\Psi \Sigma)^{\left(\frac{1}{2}\right)} \circ (\Sigma \Psi)^{\left(\frac{1}{2}\right)}\right)$ and \\
$  r \left((\Psi \Sigma)^{\left(\frac{1}{2}\right)} \circ (\Psi \Sigma)^{\left(\frac{1}{2}\right)}\right)$, as \cite[Example 3.11]{P12} shows.
}
\end{remark}

\begin{remark} {\rm Under the conditions of Theorem \ref{refin} it holds also that
\be
 r \left((\Psi \Sigma)^{\left(\frac{1}{2}\right)} \circ (\Sigma \Psi)^{\left(\frac{1}{2}\right)}\right) \le  r \left(\Psi ^2 \Sigma ^2 \right)^{\frac{1}{2}}.
\label{squares}
\ee
Indeed, it follows from Corollary \ref{genSch} and Theorem \ref{ess} that
$$ r \left((\Psi \Sigma)^{\left(\frac{1}{2}\right)} \circ (\Sigma \Psi)^{\left(\frac{1}{2}\right)}\right) \le  r \left(\Psi\Sigma\Sigma\Psi \right)^{\frac{1}{2}} = r \left(\Psi ^2 \Sigma ^2 \right)^{\frac{1}{2}},$$
}
which proves (\ref{squares}). 

As already observed in \cite[Example 3.15]{P12}, the inequality (\ref{squares}) may in some cases be better than the second inequality in (\ref{HZPep}).
\end{remark}
The following result refines the inequalities (\ref{gsh}), (\ref{gsh2}) and  (\ref{gsh3}).
\begin{theorem} Let $\Psi _1, \ldots \Psi _m$ be bounded sets of positive kernel operators on a Banach function space $L$ and let 
 $\alpha _1, \ldots \alpha _m$ be positive numbers such that \\
$\sum _{i=1} ^m \alpha _i = 1$.  If  $r \in \{\rho, \hat{\rho}\}$ and $k \in \NN$ then
\be
r (\Psi _1 ^{( \alpha _1)} \circ \cdots \circ \Psi _m ^{(\alpha _m)} ) \le  r ((\Psi _1 ^k ) ^{( \alpha _1)} \circ \cdots \circ (\Psi _m ^k) ^{(\alpha _m)} ) ^{ \frac{1}{k}} \le
r(\Psi _1)^{ \alpha _1} \, \cdots r(\Psi _m)^{\alpha _m} 
\label{gsh_ref}
\ee

If, in addition, $L$ and $L^*$ have order continuous norms, then (\ref{gsh_ref}) holds also  for each $r\in \{ \rho _{ess}, \hat{\rho} _{ess}\}$.
\label{powers}
\end{theorem}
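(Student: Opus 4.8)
The plan is to deduce the second inequality in (\ref{gsh_ref}) directly from the results already at hand and to obtain the first one by an elementwise domination argument. Throughout put $\Phi := \Psi_1^{(\alpha_1)} \circ \cdots \circ \Psi_m^{(\alpha_m)}$ and $\Phi_k := (\Psi_1^k)^{(\alpha_1)} \circ \cdots \circ (\Psi_m^k)^{(\alpha_m)}$; since $\sum_{i=1}^m \alpha_i = 1$, both are bounded sets of positive kernel operators defined on all of $L$. For the second inequality in (\ref{gsh_ref}) one applies Theorem~\ref{family} to the bounded sets $\Psi_1^k, \ldots, \Psi_m^k$ (and Theorem~\ref{ess}, when $L$ and $L^*$ have order continuous norms, for $r \in \{\rho_{ess}, \hat{\rho}_{ess}\}$) to get $r(\Phi_k) \le r(\Psi_1^k)^{\alpha_1} \cdots r(\Psi_m^k)^{\alpha_m}$; since $r(\Psi_j^k) = r(\Psi_j)^k$, extracting the $k$-th root yields $r(\Phi_k)^{1/k} \le r(\Psi_1)^{\alpha_1}\cdots r(\Psi_m)^{\alpha_m}$.

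For the first inequality the crucial observation is that $\Phi^k$ is dominated elementwise by $\Phi_k$. Indeed, any $C \in \Phi^k$ has the form $C = \bigl(A_{11}^{(\alpha_1)} \circ \cdots \circ A_{1m}^{(\alpha_m)}\bigr)\cdots\bigl(A_{k1}^{(\alpha_1)} \circ \cdots \circ A_{km}^{(\alpha_m)}\bigr)$ with $A_{ij} \in \Psi_j$, so inequality (\ref{basic2}) of Theorem~\ref{DPBfs} (used with $m$ operators in each Hadamard mean and $k$ factors) gives $C \le (A_{11}\cdots A_{k1})^{(\alpha_1)} \circ \cdots \circ (A_{1m}\cdots A_{km})^{(\alpha_m)} =: D$, and $D \in \Phi_k$ because $A_{1j}\cdots A_{kj} \in \Psi_j^k$ for every $j$. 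Since the product of positive operators is monotone in each factor, this domination is inherited by powers: every element of $(\Phi^k)^n = \Phi^{kn}$ is dominated by an element of $(\Phi_k)^n$. Now $0 \le A \le B$ implies $\rho(A) \le \rho(B)$, $\|A\| \le \|B\|$ and, when $L$ and $L^*$ have order continuous norms, $\beta(A) \le \beta(B)$, whence also $\rho_{ess}(A) = \lim_{j} \beta(A^j)^{1/j} \le \rho_{ess}(B)$ since $0 \le A^j \le B^j$. Combining the domination with these monotonicities gives $\sup_{C \in \Phi^{kn}} s(C) \le \sup_{D \in (\Phi_k)^n} s(D)$ for $s \in \{\rho, \|\cdot\|, \beta\}$ and every $n \in \NN$; taking $n$-th roots and then the relevant supremum (resp.\ limit) over $n$ from the definitions of the four radii yields $r(\Phi^k) \le r(\Phi_k)$ for each $r \in \{\rho, \hat{\rho}, \rho_{ess}, \hat{\rho}_{ess}\}$. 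Finally $r(\Phi^k) = r(\Phi)^k$, so $r(\Phi) \le r(\Phi_k)^{1/k}$, which is the first inequality in (\ref{gsh_ref}).

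I expect the only delicate point to be the bookkeeping in the domination step: matching the index pattern of (\ref{basic2}) --- stated for a single product of $k$ Hadamard weighted geometric means of $m$ operators each --- with a product of $k$ arbitrary elements of $\Phi$, and then verifying that elementwise domination survives the passage to $n$-fold products and to each of the four radii. This is routine rather than a genuine obstacle; as elsewhere in the paper, the essential-radius statements require no new idea beyond replacing $\rho(\cdot)$ by $\rho_{ess}(\cdot)$ and $\|\cdot\|$ by $\beta(\cdot)$ and invoking order continuity of the norms of $L$ and $L^*$.
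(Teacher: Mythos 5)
Your proposal is correct and follows essentially the same route as the paper: the second inequality via Theorem~\ref{family} (resp.\ Theorem~\ref{ess}) applied to $\Psi_1^k,\ldots,\Psi_m^k$ together with $r(\Psi_j^k)=r(\Psi_j)^k$, and the first via the domination $C\le D$ from (\ref{basic2}) for each element of $\Phi^k$ by an element of $\Phi_k$, propagated to $n$-fold products and through the monotonicity of $\rho$, $\|\cdot\|$ and $\beta$. The paper's proof is just your argument written out with the explicit indexing $A=A_1\cdots A_n$, $A_i\le B_i$, $B=B_1\cdots B_n\in(\Phi_k)^n$.
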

\begin{proof} Let $r \in \{\rho, \hat{\rho}\}$ and $k \in \NN$. To prove the first inequality in (\ref{gsh_ref}) take $ n \in \NN$ and $A \in (\Psi _1 ^{( \alpha _1)} \circ \cdots \circ \Psi _m ^{(\alpha _m)} )^{kn}$. Then  $A=A_1 A_2\cdots A_n$, where
$$A_i= \left(A_{i \,1 \,1}  ^{( \alpha _1)}  \circ A_{i\, 1\, 2}  ^{( \alpha _2)}  \circ \cdots \circ A_{i\, 1\, m}  ^{( \alpha _m)} \right)\left(A_{i\, 2\, 1}  ^{( \alpha _1)}  \circ A_{i\, 2\, 2}  ^{( \alpha _2)}  \circ \cdots \circ A_{i\, 2\, m}  ^{( \alpha _m)} \right) \cdots $$
$$\cdots \left(A_{i k 1}  ^{( \alpha _1)}  \circ A_{i k 2}  ^{( \alpha _2)}  \circ \cdots \circ A_{i k m}  ^{( \alpha _m)} \right) $$
for some $A_{i\, j\, 1} \in \Psi _1, \ldots, A_{i\, j \,m} \in \Psi _m$ and all $j=1, \ldots, k$, $i=1, \ldots, n$. By Theorem \ref{DPBfs} it follows that  $A\le B=B_1 \cdots B_n$, where
$$A_i \le B_i:= \left(A_{i \,1 \,1} A_{i\, 2\, 1} \cdots A_{i k 1} \right) ^{( \alpha _1)} \circ \cdots \circ \left(A_{i \,1 \,m} A_{i\, 2\, m} \cdots A_{i k m} \right) ^{( \alpha _m)}. $$
%for $i=1,\ldots ,n $ and so $A\le B:=B_1 \cdots B_n$.
Since $B \in  ((\Psi _1 ^k)^{( \alpha _1)} \circ \cdots \circ (\Psi _m ^k)^{(\alpha _m)} )^{n} $, $\rho(A)^{\frac{1}{kn}} \le \rho(B)^{\frac{1}{kn}}$ and $\|A\|^{\frac{1}{kn}} \le \|B\|^{\frac{1}{kn}}$, this proves the first inequality in (\ref{gsh_ref}).  

The second inequality in (\ref{gsh_ref}) follows from Theorem \ref{family} and the fact that $r(\Psi _i ^k)=r(\Psi _i) ^k$ for all 
$i=1,\ldots ,m$.

If $L$ and $L^*$ have order continuous norms,  then by replacing $\rho(\cdot)$ with $\rho _{ess} (\cdot)$ and $\|\cdot\|$ with $\beta(\cdot)$ in the proof above one obtains that  (\ref{gsh_ref}) holds also for  each $r\in \{ \rho _{ess}, \hat{\rho} _{ess}\}$, which completes the proof.
\end{proof}
\begin{corollary}  Let $\Psi$ and $\Sigma$ be bounded sets of positive kernel operators on a Banach function space $L$. % and let 
 %$\alpha _1, \ldots \alpha _m$ be positive numbers such that \\
%$\sum _{i=1} ^m \alpha _i = 1$.  
If  $r \in \{\rho, \hat{\rho}\}$ and $k \in \NN$ then
\be
r \left(\Psi ^{\left( \frac{1}{2} \right)} \circ \Sigma ^{\left( \frac{1}{2} \right)} \right) \le  r \left((\Psi  ^k ) ^{\left( \frac{1}{2} \right)}\circ (\Sigma ^k) ^{\left( \frac{1}{2} \right)} \right) ^{ \frac{1}{k}} \le
r(\Psi )^{ \frac{1}{2} } r(\Sigma)^{\frac{1}{2} }. 
\label{gsh_ref_two}
\ee

If, in addition, $L$ and $L^*$ have order continuous norms, then (\ref{gsh_ref_two}) holds also  for each $r\in \{ \rho _{ess}, \hat{\rho} _{ess}\}$.
\label{k_refinement}
\end{corollary}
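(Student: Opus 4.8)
The plan is to deduce Corollary~\ref{k_refinement} as the special case of Theorem~\ref{powers} in which $m=2$, $\alpha_1=\alpha_2=\frac12$, $\Psi_1=\Psi$ and $\Psi_2=\Sigma$. With these choices the chain (\ref{gsh_ref}) becomes precisely the chain (\ref{gsh_ref_two}), once one observes that the right-hand term $r(\Psi_1)^{\alpha_1}r(\Psi_2)^{\alpha_2}$ of (\ref{gsh_ref}) reads $r(\Psi)^{1/2}r(\Sigma)^{1/2}$ after the substitution. So the proof I would write is essentially one sentence invoking Theorem~\ref{powers}; the rest is bookkeeping that I would spell out for the reader's convenience.

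Spelling it out: the first inequality in (\ref{gsh_ref_two}) is literally the first inequality in (\ref{gsh_ref}) for $m=2$. For the second inequality, (\ref{gsh_ref}) (equivalently (\ref{gsh}) and (\ref{gsh2}) for $r\in\{\rho,\hat\rho\}$, or (\ref{gsh3}) for $r\in\{\rho_{ess},\hat\rho_{ess}\}$) gives
$$r\left((\Psi^k)^{(\frac12)}\circ(\Sigma^k)^{(\frac12)}\right)\le r(\Psi^k)^{\frac12}\,r(\Sigma^k)^{\frac12};$$
raising both sides to the power $1/k$ and using the elementary identity $r(\Psi^k)=r(\Psi)^k$, $r(\Sigma^k)=r(\Sigma)^k$ --- one of the ``well known facts'' recorded just before Theorem~\ref{family}, valid for every $r\in\{\rho,\hat\rho,\rho_{ess},\hat\rho_{ess}\}$ --- turns the right-hand side into $r(\Psi)^{1/2}r(\Sigma)^{1/2}$, which is the second inequality.

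For the essential radii the argument is the same: under the extra hypothesis that $L$ and $L^*$ have order continuous norms, the last sentence of Theorem~\ref{powers} already asserts (\ref{gsh_ref}) for $r\in\{\rho_{ess},\hat\rho_{ess}\}$, and the substitution $m=2$, $\alpha_1=\alpha_2=\frac12$ yields (\ref{gsh_ref_two}) for those functionals verbatim.

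The hard part will be nothing beyond this: all the substantive work is contained in Theorem~\ref{powers} (which itself rests on Theorem~\ref{DPBfs} and Theorem~\ref{family}). The only point deserving a moment of attention is to invoke the power identity $r(\Psi^k)=r(\Psi)^k$ for the \emph{correct} functional in each of the four cases rather than re-deriving it; citing it from the list of elementary facts in Section~2 keeps the argument uniform across $r\in\{\rho,\hat\rho,\rho_{ess},\hat\rho_{ess}\}$.
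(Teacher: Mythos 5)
Your proposal is correct and matches the paper's (implicit) proof exactly: the corollary is stated in the paper without proof precisely because it is the specialization $m=2$, $\alpha_1=\alpha_2=\tfrac12$ of Theorem \ref{powers}, and your bookkeeping — including the use of $r(\Psi^k)=r(\Psi)^k$ for the second inequality — mirrors how the paper itself justifies the corresponding step inside the proof of Theorem \ref{powers}. Nothing is missing.
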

Even in the case of single operators the following refinement of Theorem \ref{special_case} appears to be new.
\begin{corollary}
Let $A_1, \ldots A _m$ be positive kernel operators on a Banach function space $L$ and let 
 $\alpha _1, \ldots \alpha _m$ be positive numbers such that 
$\sum _{i=1} ^m \alpha _i = 1$.  If  $k \in \NN$ then
\be
\rho (A_1 ^{( \alpha _1)} \circ \cdots \circ A_m ^{(\alpha _m)} ) \le  \rho ((A_1 ^k ) ^{( \alpha _1)} \circ \cdots \circ (A_m ^k) ^{(\alpha _m)} ) ^{ \frac{1}{k}} \le
\rho(A_1)^{ \alpha _1} \, \cdots \rho(A_m)^{\alpha _m} 
\label{spr_ref}
\ee

If, in addition, $L$ and $L^*$ have order continuous norms, then %(\ref{gsh_ref}) holds also  for each $r\in \{ \rho _{ess}, \hat{\rho} _{ess}\}$.
\be
\rho _{ess} (A_1 ^{( \alpha _1)} \circ \cdots \circ A_m ^{(\alpha _m)} ) \le  \rho _{ess} ((A_1 ^k ) ^{( \alpha _1)} \circ \cdots \circ (A_m ^k) ^{(\alpha _m)} ) ^{ \frac{1}{k}} \le
\rho _{ess}(A_1)^{ \alpha _1} \, \cdots \rho _{ess}(A_m)^{\alpha _m} 
\label{ess_spr_ref}
\ee
\end{corollary}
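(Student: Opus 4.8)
The plan is to read off both (\ref{spr_ref}) and (\ref{ess_spr_ref}) from Theorem \ref{powers} by specializing the bounded sets there to the singletons $\Psi_i=\{A_i\}$. Two trivial remarks make this legitimate. First, for a single bounded operator $A$ on $L$ one has $\{A\}^n=\{A^n\}$, so from (\ref{genrho}), (\ref{BW}) and Gelfand's formula $\rho(\{A\})=\hat\rho(\{A\})=\rho(A)$, and from (\ref{genrhoess}), (\ref{jointess}) and (\ref{esslim=inf}) $\rho_{ess}(\{A\})=\hat\rho_{ess}(\{A\})=\rho_{ess}(A)$. Second, the set operations of Section~2 commute with forming singletons: $\{A_1\}^{(\alpha_1)}\circ\cdots\circ\{A_m\}^{(\alpha_m)}=\{A_1^{(\alpha_1)}\circ\cdots\circ A_m^{(\alpha_m)}\}$ and $\{A_i\}^k=\{A_i^k\}$. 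Substituting these identities into (\ref{gsh_ref}) with $r=\rho$ produces (\ref{spr_ref}), and with $r=\rho_{ess}$, which is available exactly when $L$ and $L^*$ have order continuous norms, produces (\ref{ess_spr_ref}).

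In fact, in the single-operator case the underlying argument is even shorter and can be recorded directly. Put $A:=A_1^{(\alpha_1)}\circ\cdots\circ A_m^{(\alpha_m)}$ and $B:=(A_1^k)^{(\alpha_1)}\circ\cdots\circ(A_m^k)^{(\alpha_m)}$, both positive kernel operators defined on all of $L$. Applying (\ref{basic2}) of Theorem \ref{DPBfs} with $A_{ij}=A_j$ for every $i=1,\dots,k$ gives $A^k\le B$. Since the spectral radius is monotone on the positive cone (a consequence of Gelfand's formula together with (\ref{equiv_op}) and the order structure of the norm on $L$) and $\rho(A^k)=\rho(A)^k$, one gets $\rho(A)=\rho(A^k)^{1/k}\le\rho(B)^{1/k}$, the first inequality of (\ref{spr_ref}); the second inequality is (\ref{gl1vecr}) of Theorem \ref{special_case} applied to $A_1^k,\dots,A_m^k$ after using $\rho(A_i^k)=\rho(A_i)^k$ and taking $k$-th roots. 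For (\ref{ess_spr_ref}) one repeats this verbatim with $\rho$ replaced by $\rho_{ess}$ and $\|\cdot\|$ by $\beta(\cdot)$: the estimate $A^k\le B$ is unchanged, $\rho_{ess}$ is monotone on the positive cone because $0\le S\le T$ forces $\beta(S)\le\beta(T)$ and hence, via (\ref{esslim=inf}), $\rho_{ess}(S)\le\rho_{ess}(T)$, and the final bound uses (\ref{ess_spectral}) of Theorem \ref{DPBfs}, the essential-radius part of Theorem \ref{special_case}, and $\rho_{ess}(A_i^k)=\rho_{ess}(A_i)^k$.

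I do not anticipate any real difficulty: the statement is a routine specialization and the work is pure bookkeeping. The only points deserving a moment's attention are that $A_i^k$ and the Hadamard weighted geometric means appearing above are indeed positive kernel operators, so that Theorems \ref{DPBfs} and \ref{special_case} apply — immediate, since a product of positive kernel operators is a positive kernel operator and a Hadamard weighted geometric mean is dominated by the corresponding weighted arithmetic mean — and that the order-continuity of $L$ and $L^*$ is precisely the hypothesis under which the $\beta$- and $\rho_{ess}$-inequalities used for the second display are valid.
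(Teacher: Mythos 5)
Your proposal is correct and matches the paper's (implicit) proof: the corollary is stated as an immediate specialization of Theorem \ref{powers} to the singleton sets $\Psi_i=\{A_i\}$, using $\rho(\{A\})=\hat\rho(\{A\})=\rho(A)$ and $\rho_{ess}(\{A\})=\hat\rho_{ess}(\{A\})=\rho_{ess}(A)$, exactly as in your first paragraph. Your second, direct argument via $A^k\le (A_1^k)^{(\alpha_1)}\circ\cdots\circ(A_m^k)^{(\alpha_m)}$ from (\ref{basic2}) and monotonicity of $\rho$ and $\rho_{ess}$ on the positive cone is just the singleton instance of the proof of Theorem \ref{powers} and is likewise sound.
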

By applying Corollary \ref{k_refinement}, additional refinements of Theorem \ref{refin} are possible. We point out the following refinement of the last inequalities in (\ref{HZPep}) and (\ref{AudSchPep}).
\begin{corollary} Let $L$, $\Psi$, $\Sigma$, $\alpha$ and $r$ be as in Theorem \ref{refin}. Then
%\be
$$   r \left((\Psi \Sigma)^{\left(\frac{1}{2}\right)} \circ (\Psi \Sigma)^{\left(\frac{1}{2}\right)}\right) ^{\frac{\alpha}{2}}   r \left((\Sigma\Psi)^{\left(\frac{1}{2}\right)} \circ ( \Sigma \Psi)^{\left(\frac{1}{2}\right)}\right) ^{\frac{1-\alpha}{2}}$$
$$ \le  r \left(((\Psi \Sigma)^k)^{\left(\frac{1}{2}\right)} \circ ((\Psi \Sigma)^k)^{\left(\frac{1}{2}\right)}\right) ^{\frac{\alpha}{2k}}   r \left(((\Sigma\Psi)^k)^{\left(\frac{1}{2}\right)} \circ ( (\Sigma \Psi)^k)^{\left(\frac{1}{2}\right)}\right) ^{\frac{1-\alpha}{2k}}
 \le  r (\Psi \Sigma) ^{\frac{1}{2}}.$$
%\label{AudSchPep}
%\ee
for all $k \in \NN$.  
\label{additional_powers}
\end{corollary}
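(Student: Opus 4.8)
The plan is to obtain Corollary \ref{additional_powers} as a direct consequence of Corollary \ref{k_refinement}, applied not to $\Psi$ and $\Sigma$ themselves but to the product sets $\Psi\Sigma$ and $\Sigma\Psi$. First I would observe that $\Psi\Sigma$ and $\Sigma\Psi$ are again bounded sets of positive kernel operators on $L$ (a fact already used tacitly throughout the paper, e.g.\ in Theorem \ref{family} and Corollary \ref{genSch}), so Corollary \ref{k_refinement} is applicable to them with no additional hypotheses, and, when $L$ and $L^*$ have order continuous norms, in the essential versions too.

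Next, fixing $r$ as in Theorem \ref{refin} and $k \in \NN$, I would invoke Corollary \ref{k_refinement} twice. Taking both input sets equal to $\Psi\Sigma$ yields
\[
r\!\left((\Psi\Sigma)^{(\frac12)} \circ (\Psi\Sigma)^{(\frac12)}\right) \le r\!\left(((\Psi\Sigma)^k)^{(\frac12)} \circ ((\Psi\Sigma)^k)^{(\frac12)}\right)^{\frac1k} \le r(\Psi\Sigma),
\]
and taking both input sets equal to $\Sigma\Psi$ yields the analogous chain with $\Sigma\Psi$ in place of $\Psi\Sigma$ throughout, whose right endpoint is $r(\Sigma\Psi)$.

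Then, since $\alpha \in [0,1]$ and every quantity appearing is non-negative, I would raise the first chain to the power $\tfrac{\alpha}{2}$, raise the second to the power $\tfrac{1-\alpha}{2}$, and multiply the two chains term by term; applying the identity $r(\Sigma\Psi) = r(\Psi\Sigma)$ to the rightmost term collapses the exponent to $\tfrac{\alpha}{2} + \tfrac{1-\alpha}{2} = \tfrac12$, which gives exactly the two-step inequality claimed in the corollary. The case $r \in \{\rho_{ess}, \hat{\rho}_{ess}\}$ follows verbatim, using the order-continuity part of Corollary \ref{k_refinement}.

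I do not anticipate any real obstacle here; the statement is essentially bookkeeping on top of Corollary \ref{k_refinement}. The only points requiring a moment's care are that Corollary \ref{k_refinement} is phrased for a pair of sets and may legitimately be used with the two sets coinciding, and that raising a chain of inequalities between non-negative reals to a fixed positive exponent preserves it (for $\alpha=0$ or $\alpha=1$ one factor becomes trivial, and for $k=1$ the middle terms coincide with the left-hand terms, so the statement is consistent in those degenerate cases).
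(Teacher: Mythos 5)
Your proposal is correct and is exactly the route the paper intends: the corollary is stated immediately after the sentence ``By applying Corollary \ref{k_refinement}, additional refinements of Theorem \ref{refin} are possible,'' and the paper gives no further proof, leaving precisely the bookkeeping you carry out (apply Corollary \ref{k_refinement} with both input sets equal to $\Psi\Sigma$, then to $\Sigma\Psi$, raise to the powers $\tfrac{\alpha}{2}$ and $\tfrac{1-\alpha}{2}$, multiply, and use $r(\Sigma\Psi)=r(\Psi\Sigma)$). Your attention to the degenerate cases $\alpha\in\{0,1\}$ and $k=1$ and to the legitimacy of taking the two sets in Corollary \ref{k_refinement} to coincide is appropriate and raises no issues.
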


%\begin{remark}{\rm}
%\end{remark}
\noindent {\bf Acknowledgements.} 
The author acknowledges a partial support of  the Slovenian Research Agency (grants P1-0222 and J1-8133).

\bibliographystyle{amsplain}

\end{document}